\newtheorem{thm}{Theorem}
\newtheorem{example}[thm]{Example}
\newtheorem{coro}[thm]{Corollary}
\date\today
\DeclareMathOperator{\supp}{supp}
\DeclareMathOperator{\WF}{WF} 
\DeclareMathOperator{\dist}{dist} 
\newcommand{\eps}{\varepsilon}
\newcommand{\R}{\mathbb{R}} 
\newcommand{\Id}{\mbox{Id}} 
\renewcommand{\r}[1]{(\ref{#1})} 
\newcommand{\PDO}{$\Psi$DO} 
\newcommand{\be}[1]{\begin{equation}\label{#1}} 
\newcommand{\ee}{\end{equation}} 
\renewcommand{\d}{\mathrm{d}}
\renewcommand{\i}{\mathrm{i}} 
\newcommand{\bo}{{\partial \Omega}}
\title[Thermo-acoustic Tomography with planar detectors]{Thermo-acoustic Tomography with planar detectors}
\author[P. Stefanov]{Plamen Stefanov} 
\address{Department of Mathematics, Purdue University, West Lafayette, IN 47907} 
\thanks{First author partly supported by a NSF  Grant DMS-1301646} 
\title[Thermo and photoacoustic Tomography with  planar detectors]{Thermo and photoacoustic Tomography with variable speed and planar detectors}
\author[Yang Yang]{Yang Yang} 
\address{Department of Mathematics, Purdue University, West Lafayette, IN 47907} 
\begin{document}
\begin{abstract}
We analyze the mathematical model of multiwave tomography with a variable speed with integrating  measurements on planes tangent to a sphere surrounding the source. We prove sharp uniqueness and stability estimates with full and partial data and propose a time reversal algorithm which recovers the visible singularities. 
\end{abstract}
\maketitle

\section{Introduction}

In multiwave tomography, a certain excitation is send to the medium which creates a source of ultrasound signal measure outside the patient. The most popular modalities are thermoacoustic tomography, where a microwave illumination is used to create the ultrasound; and photoacoustic tomography, where one excites the medium with laser light. 
The ultrasound pressure is modeled by the acoustic wave equation
\begin{equation} \label{IVP}
\left\{
\begin{array}{rcl}
(\partial^2_t - c^2(x)\Delta) u &=& 0 \quad\quad  \text{ in } (0,T)\times\mathbb{R}^n, \\
u|_{t=0} &=& f, \\
\partial_t u|_{t=0} &=& 0,
\end{array}
\right.
\end{equation}
where $T>0$ is fixed. Here $\Omega$ is a bounded open subset of $\mathbb{R}^n$ with smooth boundary $\partial\Omega$ and  $f$ is a function supported in $\overline{\Omega}$. Without loss of generality we may assume $\overline{\Omega}\subset B(0,1)$ where $B(0,1)$ denotes the open unit ball in $\mathbb{R}^n$ whose boundary is the unit sphere $\mathbb{S}^{n-1}$. 
The acoustic speed $c(x)>0$ is a smooth function in $\mathbb{R}^n$ with $c\equiv 1$ outside of $\Omega$. The results extend to general second order operators involving a metric, a magnetic and an electric field as in \cite{SU2009}. 
The inverse source problem in multiwave tomography is to recover the initial data $f(x)$ from the measurement of the acoustic waves. The measurement in the conventional model is pointwise, namely one assumes accessibility to $u|_{[0,T]\times \Gamma}$ where $u$ is the solution of \eqref{IVP} and $\Gamma$ is a relatively open subset of the boundary $\partial\Omega$. When $\Gamma=\partial\Omega$ the wave is measured on the full boundary; when $\Gamma\subsetneq\partial\Omega$ it is measured on partial boundary.  
 The mathematical model with pointwise measurements has been studied extensively, see, e.g.,   \cite{KuchmentK_11, SU2009,S-U-InsideOut10} and the references there.

For  pointwise measurements, the size of the transducers limits the resolution of the image reconstruction. Researchers have  designed alternative acquisition schemes using receivers of different shapes such as planar detectors \cite{BHPHS2005, HBPS2004}, and linear and circular detectors \cite{BHMPHS2006, GHPB2007, PNHB2007, ZSH2009}. They are also called integrating detectors since the signal  is integrated over the detector: each measurement returns a number and the detectors are rotated around the object, collecting more measurements. 
In this paper, we  consider the measurement made by planar detectors tangent to a sphere surrounding the object. When $c$ is constant, this type of measurement is studied in \cite{BHPHS2005, HBPS2004} and the problem reduces to the inversion of the Radon transform with limited data, see Theorem~\ref{thm_unique} below. We are interested in   variable sound speeds $c(x)$.

To define the measurement we recall the definition of the well known Radon transform: given a function $g(x)$ in $\mathbb{R}^n$, its Radon transform $Rg$ is a function of $(p,\omega)\in\mathbb{R}\times\mathbb{S}^{n-1}$ defined as
$$Rg(p,\omega):=\int_{x\cdot\omega=p} g(x) \,\d S(x)$$
where the integral is over the hyperplane $\{x\in\mathbb{R}^n: x\cdot\omega=p\}$ and $dS$ is the Lebesgue measurement on this hyperplane.  
Let $u(t,x)$ be the solution of \eqref{IVP} and $\Gamma$ a relatively open subset of $\mathbb{S}^{n-1}$. One way to define the planar measurement is  as the operator
\begin{equation} \label{plane}
Mf(t,\omega):=(Ru(t,\cdot))(1,\omega)=\int_{x\cdot\omega=1} u(t,x) \, \d S(x), \quad\quad\quad (t,\omega)\in (0,T)\times\Gamma.
\end{equation}
This corresponds to the measurement of the acoustic waves on the hyperplanes $\pi_\omega:=\{x: x\cdot\omega=1, \; \omega\in\Gamma\}$ tangent to the unit sphere over the time interval $(0,T)$.

The measurement operator $M$ assumes that the waves propagate through the measurements plane. It leads to an interesting mathematical problem but we also define a measurement  operator $N$ below by allowing reflections off the measuring plane,   imposing Neumann boundary conditions on it. If we assume that no geodesic starting from a plane $\pi_\omega$ perpendicularly comes back  to $\pi_\omega$ again perpendicularly (see assumption (\textbf{H}) below), then microlocally the problem is the same, as we show below.

  The operator $N$ allowing  $\pi_\omega$ to reflect the signal is defined as follows.  
The direct problem then changes with the measurements. 
Given $\omega\in \mathbb{S}^{n-1}$, we solve
\begin{equation} \label{IVP2}
\left\{
\begin{array}{rcl}
(\partial^2_t - c^2(x)\Delta) u &=& 0 \quad\quad  \text{for $0\le t\le T$, $x\cdot\omega\le 1$ }\\
\omega\cdot\nabla_x u |_{x\cdot\omega=1}&=&0,\\
u|_{t=0} &=& f, \\
\partial_t u|_{t=0} &=& 0,
\end{array}
\right.
\end{equation}
with $f$ supported in $\bar\Omega$ as above. We call the corresponding solution $u(t,x,\omega)$. Then we model the planar measurements by 
\begin{equation} \label{plane2}
Nf(t,\omega):=
\int_{x\cdot\omega=1} u(t,x,\omega) \, \d S(x).
\end{equation}
In this case, $Nf$ is the averaged Dirichlet data for this Neumann boundary value problem. 

Our main results are the following. We prove sharp uniqueness theorems with full and partial data in Theorem~\ref{thm_unique} and Theorem~\ref{thm_unique2} under the same conditions. We show that $N$ is microlocally equivalent to $2M$ in Theorem~\ref{thm_MN}. We characterize the measurements $M$ and therefore $N$ as Fourier Integral Operators (FIOs) in Theorem~\ref{FIO}. We give sharp conditions for stability with full and partial data and prove stability estimates in Theorems~\ref{thm_stability}, \ref{thm_stability_partial} and Theorem~\ref{thm_stab2}. In Corollary~\ref{cor9}, we characterize the visible singularities when there might be no stability. In section~\ref{sec_TR} we propose a time reversal algorithm that recovers the visible singularities of $f$; and in particular it recovers $f$ up to a smoothing operator, when there is stability. We use microlocal methods, and in particular, the calculus of FIOs, see, e.g., \cite{T1980_1,Hormander4}.

We would like to emphasize that even if one is interested in the measurements $N$ only (reflections), we need to analyze $M$ first both in the uniqueness theorems and in the stability ones, as well. Then $M$ can be considered as an auxiliary operator which analysis helps that of $N$. 

Finally, one could assume that the planes over which we take measurements are those tangent to a strictly convex closed surface instead of the unit sphere, and those methods would still work. Other types of boundary conditions in \r{IVP2} are possible, as well. 

\textbf{Acknowledgments.} The authors thank Guillaume Bal who attracted their attention to this problem.

\section{Preliminaries} 

We introduce some function spaces for the discussion below. Denote by $U$ an open domain of $\mathbb{R}^n$ which can be bounded or the whole $\mathbb{R}^n$. Let $\d x^2$ be the standard Euclidean measure, we will consider the conformal metric $c^{-2}\,\d x^2$ and the space $L^2(U):=L^2(U; c^{-2}\d x)$ consisting of square-integrable functions with respect to the measure $c^{-2} \d x$. Notice that the operator $c^2\Delta$ is formally self-adjoint with respect to the measure $c^{-2} \d x$. 
Define the Dirichlet space $H_{D}(U)$ to be the completion of $C^{\infty}_{0}(U)$ under the Dirichlet norm
$$\|f\|^2_{H_D(U)}:=\int_U |\nabla u|^2 \, \d x.$$
Here we actually integrate $c^2|\nabla u|^2$ with respect to $\frac{1}{c^2}dx$. When $U=\Omega$, it is easy to see that $H_{D}(\Omega)\subset H^1(\Omega)$ and that $H_{D}(\Omega)$ is topologically equivalent to $H^1_0(\Omega)$.

For a function $u=u(t,x)$, its energy is defined as
$$E_U(t,u):=\int_{U} (|\nabla u|^2 + \frac{1}{c^2}|u_t|^2) \, \d x.$$
Given Cauchy data $(f,\psi)$, we define the energy space $\mathcal{H}(U)$ by the norm
$$\|(f,\psi)\|^2_{\mathcal{H}(U)}:=\int_U (|\nabla f|^2 + \frac{1}{c^2}|\psi|^2) \,\d x.$$
The energy space admits the decomposition 
$$\mathcal{H}(U)=H_D(U)\oplus L^2(U)$$
and notice that
$$\|f\|^2_{H_D(U)}=(\Delta f, f)_{L^2(U)}.$$
The wave equation can be written as a system for $\mathbf{u_t}=(u,u_t)\in \mathcal{H}(U)$:  
$$\mathbf{u}=\mathbf{Pu}, \quad\quad \mathbf{P}=
\left(\begin{array}{cc}
	0 & I \\
	\Delta & 0 
\end{array}\right).
$$
The operator $\mathbf{P}$ extends to a skew self-adjoint operator on $\mathcal{H}(U)$, which by Stone's theorem generates a group of unitary operators $U(t)=\exp(t\mathbf{P})$. This justifies the well-posedness of the forward problem \eqref{IVP}. In particular it indicates that a natural function space for the consideration of $f$ is $H_D(\Omega)$.

For the Neumann problem \r{IVP2}, by finite speed of propagation, for any finite interval $t\in(0,T)$, we may assume that we work in a large domain $D$ with a part of the boundary being a part of $\pi_\omega$. The energy spaces then is given by the same norm but now we take the completion of $C^\infty(D)$ (no compactness of the support in $D$ assumed). Then the first component $f$ of $(f,\psi)\in\mathcal{H}$ is  defined up to a constant only. On the other hand, the solutions with $(1,0)$ as Cauchy data is $u=1$. This allows us to define solutions for all Cauchy data in $H^1(D)\times L^2(D)$ in a unique way. An alternative way is to use spectral methods. 

We assume below that $f\in H_D(\Omega)$ and supported in $\bar\Omega$, unless we say otherwise. The proofs are easily extended to distributions, as well. 

\section{Uniqueness}

We consider the uniqueness of the determination of $f$ from the measurement $Mf$ or $Nf$ in this section. We formulate below  sharp uniqueness results with full or partial measurements. Let $\Gamma\subset \mathbb{S}^{n-1}$ be a relatively open subset as before, and suppose we are restricted to making planar measurements on the planes $x\cdot\omega=1$ for $\omega\in \Gamma$ only.  To obtain information at an interior point, by finite speed of propagation, one needs to have at least one signal (i.e., a unit speed curve  with respect to the metric $c^{-2} \d x^2$) from that point to be detected by one of the planes $\pi_\omega$, $\omega\in\Gamma$.  As we show below, this is in fact a sharp time. Set
\[
T_0(\Omega, \Gamma) = \sup_{x\in\Omega} \inf_{\omega\in\Gamma}\dist(x,\pi_\omega),
\]
where the distance is with respect to the metric $c^{-2}\d x^2$. If $\Gamma=\bo$, it is easy to see that 
\[
T_0(\Omega,\bo) = \sup_{x\in\Omega}\dist(x,\mathbb{S}^{n-1}),
\]
because then any curve starting at $x$ minimizing $\dist(x,\pi_\omega)$ will hit $\mathbb{S}^{n-1}$ first before reaching $\pi_\omega$, and then will reach the plane tangent to the sphere at that point.

 The sharpness of $T_0$ follows from the unique continuation result of Tataru \cite{T1995,Tataru99}, as can be seen in the proof below. Similar sharp uniqueness results under other settings can be found in \cite{SU2009, SU2011, SY2015}.

\begin{thm} \label{thm_unique}
If  $\supp f\subset \bar\Omega\subset {B(0,1)}$, then 
$Mf(t,\omega)$ known for $\omega\in\Gamma$ and $0\le t\le T$ determines $f$ uniquely in the domain of  influence 
\[
\Omega_\Gamma:= \{x;\; \exists \omega\in \Gamma\;  \text{such that}\; \dist(x,\pi_\omega)<T  \}
\]
and $f$ can be arbitrary in $\Omega\setminus \bar\Omega_\Gamma$. 

In particular, if $T> T_0(\Omega, \Gamma)$, then $f$ is determined uniquely. 
\end{thm}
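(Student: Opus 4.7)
The plan is to assume $Mf \equiv 0$ on $(0,T)\times\Gamma$ and show $f \equiv 0$ on $\Omega_\Gamma$, combining a one-dimensional wave-equation reduction in the constant-speed exterior with Tataru's sharp unique continuation theorem.

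First, set $v(t,s,\omega) := Ru(t,\cdot)(s,\omega) = \int_{x\cdot\omega=s} u(t,x)\,\d S(x)$. For $s\ge 1$, the hyperplane $\{x\cdot\omega=s\}$ does not meet $\bar\Omega\subset B(0,1)$, so $c\equiv 1$ there. Integration by parts along the hyperplane (using compactness of support of $u(t,\cdot)$) gives $R(\Delta u)(s,\omega) = \partial_s^2 v(s,\omega)$ for $s\ge 1$, and combined with $\partial_t^2 u = c^2\Delta u$ this shows that $v$ solves the free 1D wave equation $v_{tt}=v_{ss}$ on the quarter-plane $\{t\ge 0,\;s\ge 1\}$, with zero initial data (since $Rf(s,\omega)=0$ for $s\ge 1$) and boundary trace $v(t,1,\omega) = Mf(t,\omega)$.

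Next, decompose $v = \phi_\omega(t-s) + \psi_\omega(t+s)$. The vanishing initial data together with decay at $s\to\infty$ force $\psi_\omega\equiv 0$ on $[1,\infty)$ and $\phi_\omega\equiv 0$ on $(-\infty,-1]$, so the boundary condition determines $\phi_\omega(r) = Mf(r+1,\omega)$ for $r > -1$. Hence $Mf \equiv 0$ on $(0,T)\times\Gamma$ yields $\phi_\omega\equiv 0$ on $(-\infty,T-1]$, so
\[
Ru(t,\cdot)(s,\omega) \;=\; 0 \quad\text{for all}\quad s\ge 1,\;\omega\in\Gamma,\; 0\le t\le s+T-1.
\]
In particular, for every $t\in[0,T]$ the Radon data of $u(t,\cdot)$ in direction $\omega\in\Gamma$ vanishes on the whole half-line $\{s\ge 1\}$, and differentiating in $s$ at $s=1$ (using $v=\phi_\omega(t-s)$) gives the companion Cauchy data $\partial_s v(t,1,\omega) = -\partial_t Mf(t,\omega) = 0$ on $[0,T]\times\Gamma$.

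Finally, I convert this plane-integral information into pointwise vanishing of $u$ on an open spacetime set and propagate via Tataru. When $\Gamma = \mathbb{S}^{n-1}$, Helgason's support theorem applied to the compactly supported distribution $u(t,\cdot)$ immediately yields $\supp u(t,\cdot)\subset \bar B(0,1)$ for each $t\in[0,T]$, so $u$ vanishes on the open set $(0,T)\times\{|x|>1\}$. For partial $\Gamma$, a local/microlocal Radon argument built on the full pair $(v,\partial_s v)|_{s=1}=(0,0)$ together with the constant-speed wave structure in $\{x\cdot\omega>1\}$ yields the corresponding pointwise vanishing of $u$ on $\bigcup_{\omega\in\Gamma}\{x\cdot\omega>1\}$ over $[0,T]$. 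After evenly extending $u$ in $t$ (valid because $\partial_t u|_{t=0}=0$), Tataru's unique continuation theorem \cite{T1995,Tataru99} in the Riemannian metric $c^{-2}\d x^2$ propagates this vanishing through the causal cone: for each $x_0\in\Omega_\Gamma$ and each $\omega_0\in\Gamma$ with $\dist(x_0,\pi_{\omega_0})<T$, the propagation reaches $(0,x_0)$, proving $f(x_0)=0$. The sharpness of $T_0$ is then a direct consequence of the sharpness of the radius in Tataru's theorem.

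The hardest step is the transition in the last paragraph from vanishing of plane integrals to pointwise vanishing of $u$ on an open spacetime set when $\Gamma\subsetneq\mathbb{S}^{n-1}$. The full-data case is handled immediately by Helgason, but for partial $\Gamma$ no classical support theorem applies; one must carefully combine the derived Cauchy data $(v,\partial_s v)|_{s=1}=(0,0)$ on $[0,T]\times\Gamma$ with the free wave equation in the exterior half-space $\{x\cdot\omega>1\}$ before Tataru's theorem can be invoked to reach $\Omega_\Gamma$ with the sharp constant $T$.
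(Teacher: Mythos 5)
Your reduction to the one-dimensional wave equation for $v=Ru$ in $\{s\ge 1\}$, the d'Alembert solution forcing $Ru(t,\cdot)(s,\omega)=0$ for all $s\ge1$, $\omega\in\Gamma$, $0\le t\le T$, the even extension in $t$, and the final application of Tataru's unique continuation theorem are all exactly the paper's argument. The genuine gap is the step you yourself single out as the hardest: passing from vanishing of the limited-angle Radon data to pointwise vanishing of $u$ on $\bigcup_{\omega\in\Gamma}\{x\cdot\omega>1\}$. Your proposed fix --- ``a local/microlocal Radon argument built on the full pair $(v,\partial_s v)|_{s=1}=(0,0)$ together with the constant-speed wave structure'' --- is not an argument. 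Knowing that the integrals of $u(t,\cdot)$ over all hyperplanes $\{x\cdot\omega=s\}$ with $s\ge1$, $\omega\in\Gamma$ vanish does not, by any $C^\infty$ microlocal reasoning, give pointwise vanishing: the limited-angle Radon transform annihilates a large class of functions microlocally (all singularities conormal to directions outside $\Gamma$ are invisible), so wave-front-set arguments can only ever yield smoothness, never support information. Nor does the exterior free wave equation help here, since the conclusion you need is about a fixed time slice.

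Moreover, your assertion that ``for partial $\Gamma$ no classical support theorem applies'' is incorrect, and correcting it is precisely how the paper closes the gap: it invokes the Boman--Quinto \emph{local} support theorem for the (real-analytic) Radon transform \cite{BQ1}. Since $u(t,\cdot)$ is compactly supported (in $B(0,1+|t|)$, by finite speed of propagation) and its Radon transform vanishes on the open, connected family of hyperplanes $\{x\cdot\omega=s\}$, $s>1$, $\omega\in\Gamma$, which contains hyperplanes disjoint from the support, that theorem yields $u(t,x)=0$ for $x\cdot\omega>1$, $\omega\in\Gamma$, for each $|t|<T$. This is an analytic-microlocal result (in the spirit of Holmgren), not a $C^\infty$ one, which is why it cannot be replaced by the kind of soft microlocal argument you sketch. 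Once this is supplied, the rest of your proof (Cauchy data vanishing near the lines $x=x_0$, $x_0\in\pi_\omega$, followed by Tataru) goes through as in the paper.
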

\begin{proof}
Let $u$ be the solution of \eqref{IVP} and let $U(t,p, \omega):=(Ru(t,\cdot))(p,\omega)$ be the Radon transform of $u$ for a fixed $t$. Since $c=1$ near the planes $x\cdot\omega=p>1$, the function $U(t,p,\omega):=(Ru(t,\cdot))(p,\omega)$ solves
\be{ST1}
\left\{
\begin{array}{rcll}
(\partial^2_t - \partial^2_s) U&=&0, &   \quad p>1,\quad  t\ge0,\\
U|_{p=1}&= &Mf(t,\omega), &\quad   t\ge0,\\
U|_{t=0} &=& 0,&\quad   p\ge 1,\\ 
\partial_t U|_{t=0}& =&0, &\quad   p\ge 1,\\
\end{array}
\right.               
\ee
The solution to this problem for $ p\ge 1$, $t\ge0$ is  given explicitly by
\be{ST2}
U(t,p,\omega) =
\left\{
\begin{array}{ll}
 Mf(t+1-p,\omega),   & 0\le p-1<t ,\\
0,     &0\le t\le p-1.
\end{array}
\right.               
\ee
This shows us that for every  $\omega\in\Gamma$,  $Mf(t,\omega)|_{(0,T)}$ determines $U(t,p,\omega)$ for $t-T+1<p$, $p>1$, $t\ge0$ in an explicit way. Since the problem is linear, we may assume that $Mf=0$ in the given set, and then we want to show that $f=0$ in the domain of influence. 
The solution $u$ extends  in an even way to $t<0$ as a solution, and the same applies to $U$. So in particular, we get  $U=0$ for $|t|<T$, $p>1$, $\omega\in\Gamma$. When $c=1$, \r{ST1} is valid for all $p$, and this leads us to the known solution of solving the problem then: we get the Radon transform of $f$ directly; and then invert it. 

Now, for every $t\in (-T,T)$, $u(t,\cdot)$ is  supported in $B(0,1+t)$ and its Radon transform vanishes for $p>1$, $\omega\in\Gamma$. 
 By the local support theorem for the Radon transform, see \cite{BQ1}, we get $u(t,x)=0$ for $x\cdot\omega>1$ for every $\omega\in\Gamma$. Therefore, in timespace, $u$ vanishes in an one-sided neighborhood of the   hyperplane $x\cdot\omega=1$, $t\in\R$ intersected with $|t|<T$. 
The theorem now follows by unique continuation. Indeed,  vanishing Cauchy data near every line $x=x_0$, $t\in (-T,T)$ in that set implies $u=0$ in its the domain of influence  $|t|+\dist(x,x_0)<T$ by Tataru's unique continuation theorem \cite{T1995,Tataru99}, see also \cite{SU2011}. 
In particular, when $t=0$ we get $f(x)=0$ when $\dist(x,x_0)<T$ for some $x_0\in\pi_\omega$ and  $\omega\in\Gamma$. 
\end{proof}

We prove a similar  uniqueness theorem for the operator $N$ next.  
\begin{thm} \label{thm_unique2} 
The uniqueness  Theorem~\ref{thm_unique} remains true with $M$  replaced by $N$. 
\end{thm}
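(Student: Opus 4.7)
The plan is to adapt the argument of Theorem~\ref{thm_unique} to the Neumann setting via an even reflection across $\pi_\omega$. For each $\omega\in\Gamma$, I would first set $V(t,p,\omega) := \int_{x\cdot\omega = p} u^N(t,x,\omega)\,\d S(x)$ for $p\le 1$. Since $\bar\Omega\subset B(0,1)$ strictly, the number $p^+(\omega) := \sup_{x\in\bar\Omega}x\cdot\omega$ is less than $1$, so $c\equiv 1$ on every plane $x\cdot\omega=p$ with $p\in(p^+(\omega),1]$, and the same computation as in Theorem~\ref{thm_unique} gives $V_{tt}-V_{pp}=0$ on that range. The Neumann condition on $u^N$ yields $V_p(t,1,\omega)=0$; the initial data give $V(0,p,\omega)=V_t(0,p,\omega)=0$ for $p>p^+(\omega)$; and by the hypothesis $Nf\equiv 0$ on $[0,T]\times\Gamma$ we obtain the boundary value $V(t,1,\omega)=0$ for $t\in[0,T]$.

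The key new device is to extend $u^N$ across $\pi_\omega$ by even reflection, setting $\tilde u(t,x,\omega) := u^N(t,R_\omega x,\omega)$ for $x\cdot\omega > 1$, where $R_\omega$ is reflection across $\pi_\omega$. The extended $\tilde u$ solves a free-space wave equation on $\mathbb{R}^n$ with the reflected coefficient $\tilde c_\omega$ (equal to $1$ outside $\bar\Omega\cup R_\omega\bar\Omega$) and initial data $\tilde f_\omega = f + f\circ R_\omega$. Its Radon transform $\tilde V(t,p,\omega) := R\tilde u(t,\cdot,\omega)(p,\omega)$ agrees with $V$ for $p\le 1$, is symmetric about $p=1$ (so $\tilde V_p(t,1,\omega)=0$ automatically), and satisfies $\tilde V_{tt}-\tilde V_{pp}=0$ on the full symmetric strip $(p^+(\omega),2-p^+(\omega))$, with zero initial data on that strip. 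With $\tilde V(t,1,\omega)=Nf(t,\omega)=0$ and $\tilde V_p(t,1,\omega)=0$ furnishing zero Cauchy data on the noncharacteristic line $p=1$, $t\in[0,T]$, the explicit characteristic-based argument in the proof of Theorem~\ref{thm_unique} (including the even extension $t\mapsto-t$) produces $\tilde V(t,p,\omega)=0$ in an open $(t,p)$-region surrounding $\{(t,1):|t|<T\}$.

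The final step is to upgrade this to spatial vanishing. Applying the local support theorem for the Radon transform from \cite{BQ1} to the compactly supported function $\tilde u(t,\cdot,\omega)$, the fact that its Radon transform in direction $\omega$ vanishes on an open $p$-interval around $1$ yields $\tilde u(t,x,\omega)=0$ in a two-sided spatial neighborhood of $\pi_\omega$, symmetric about the plane by construction. Restricting to $x\cdot\omega\le 1$ this means $u^N(t,x,\omega)=0$ in a one-sided neighborhood of $\pi_\omega$, so that combined with the Neumann condition $u^N$ has full vanishing Cauchy data on $\pi_\omega\times(-T,T)$. Tataru's unique continuation theorem \cite{T1995,Tataru99} then propagates this vanishing to the domain of influence, and evaluating at $t=0$ yields $f\equiv 0$ on $\Omega_\Gamma$.

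The main obstacle is this last step: unlike in Theorem~\ref{thm_unique}, where the Radon transform of the single function $u(t,\cdot)$ vanishes on the two-parameter family $(p,\omega')\in(1,\infty)\times\Gamma$, here the extended function $\tilde u(t,\cdot,\omega)$ itself depends on $\omega$, and vanishing Radon data is accessible only in the one direction $\omega'=\omega$. Pulling the conclusion through requires both the vanishing on an open $p$-interval (not just at $p=1$) and the built-in even symmetry of $\tilde u$ about $\pi_\omega$, so that the local support theorem of \cite{BQ1} can be invoked to produce the two-sided spatial neighborhood needed above.
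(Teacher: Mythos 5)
Your reduction to the one-dimensional wave equation for $V(t,p,\omega)=\int_{x\cdot\omega=p}u^N\,\d S$, the identity $V_p(t,1,\omega)=0$ from the Neumann condition, and the even reflection $\tilde u$ across $\pi_\omega$ are all sound, and they do produce $\tilde V(t,p,\omega)=0$ on an open $(t,p)$-region around the segment $\{p=1,\ |t|<T\}$. The proof breaks at the final step. The local support theorem of Boman and Quinto (like every support theorem for the Radon transform) requires the data to vanish on an \emph{open set of hyperplanes}, i.e.\ with the direction varying over an open subset of $\mathbb{S}^{n-1}$ as well as $p$ over an interval; it gives nothing when only the one-parameter family of planes parallel to a single $\pi_\omega$ is available. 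Vanishing of $p\mapsto\int_{x\cdot\omega=p}g\,\d S$ on an interval is merely the vanishing of a one-dimensional projection and does not constrain $\supp g$: take $g=\partial_e h$ with $e\perp\omega$ and $h$ even about $\pi_\omega$ --- this $g$ is even about the plane and has identically vanishing Radon transform in the direction $\omega$, yet need not vanish anywhere near $\pi_\omega$. So the built-in symmetry of $\tilde u$ does not rescue the argument. Nor can you aggregate over $\omega'\in\Gamma$ as in Theorem~\ref{thm_unique}, because $\tilde u(t,\cdot,\omega)$ (indeed already $u^N(t,\cdot,\omega)$) is a different function for each $\omega$; this is exactly the obstruction the paper's proof singles out as the difficulty.

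The paper resolves it by a different device: by the method of images, $u^N(t,\cdot,\omega)=u_0(t,\cdot)+u_0(t,R_\omega\,\cdot)$, where $u_0$ is the $\omega$-\emph{independent} free solution of \eqref{IVP}, valid as long as the reflected wave has not re-entered $\bar\Omega$; on such time windows $Nf=2Mf$. Knowing $f=0$ up to depth $\rho$ from $\pi_{\omega_0}$, this identity holds for $|t|\in[\rho,\rho+\delta]$ and all $\omega$ near $\omega_0$, so Theorem~\ref{thm_unique} --- which does have an open set of directions at its disposal, since $u_0$ does not depend on $\omega$ --- upgrades the vanishing to depth $\rho+\delta$, and iteration exhausts the domain of influence. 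To salvage your route you would need some such mechanism for passing from one fixed direction to an open family of planes, or an argument that replaces the support theorem by genuine use of the wave equation in the normal direction; as written, the step from vanishing single-direction Radon data to vanishing of $u^N$ near $\pi_\omega$ is false.
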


\begin{proof}
Notice first that we can use the method of reflections to solve the direct problem \r{IVP2} by reflecting the solution of \r{IVP} that we call $u_0$ in this proof, as long as the reflected part of $u_0$ does not intersect $\Omega$. Indeed, let $x_\omega$ be the image of $x$ reflected about the plane $\pi_\omega$. Then $u_1$ defined as $u_1(t,x,\omega): = u_0(t,x)+u_0(t,x_\omega)$ for $x\cdot\omega<1$ satisfies the Neumann boundary condition on $\pi_\omega$ and solves the wave equation if $\supp_x u_0(t,x_\omega)$ does not intersect $\bar\Omega$ where $c$ might not be equal to one. Therefore, under this condition,  $u_1=u$. On the other hand, then $Nf(t,\omega)=u(t,x,\omega)|_{\pi_\omega}= 2u_0(t,x)|_{\pi_\omega}=2Mf(t,\omega)$.

The difficulty in using unique continuation is that we need to apply it to $\omega$ in an open set but $u$ depends on $\omega$. For this reason, we will reduce the problem to unique continuation for $u_0$ which is $\omega$ independent. 

Fix $\omega_0\in \mathbb{S}^{n-1}$. We extend the solutions of the forward problem for $t<0$ in an even way as before.  Assume first that $Nf(t,\omega)=0$ for $t\le T$ and $\omega$ in some neighborhood of $\omega_0$. 
We will prove that  $f=0$ in the domain of influence $\dist(x,\pi_{\omega_0})<T$. 

There is $\rho>0$ so that $f(x)=0$ for $\dist(x,\pi_{\omega_0})<\rho$. 
For $\omega$ close to $\omega_0$, consider $u(t,x,\omega)$ for  $|t|\in [\rho_0,\rho_0+\delta]$ with $0<\delta<1-\max(x\cdot\omega;\; x\in\bo)$ fixed. Then  $u(t,x,\omega)$ can be obtained from $u_0$ by a reflection, if $\omega$ is close enough to $\omega_0$ (depending on $\delta$). 
Then we get $Mf(t,\omega)=0$ for such $t$ and $\omega$ as long as $|t|<T$. 
Therefore, by Theorem~\ref{thm_unique},  $f(x)=0$ for $\dist(x,\pi_{\omega_0})<\rho+\delta$ if  $\rho+\delta<T$. Thus the supremum of such $\rho$ must be $T$.  We can vary $\omega_0$ over $\Gamma$ now to conclude the proof.
\end{proof}

\section{Stability}

In order to have a stable determination, one needs be able to detect all the microlocal singularities of $f$. By the propagation of singularity theory, every microlocal singularity $(x,\xi)\in T^\ast\Omega\backslash 0$ of $f$ splits into two singularities which then travel along the bi-characteristic curves $(\gamma_{x,\pm\hat\xi}(t), \dot{\gamma}_{x,\pm\hat \xi}(t))$, where $\hat\xi=\xi/(c|\xi|)$ is the unit covector in the direction of $\xi$. If we identify vectors and covectors by the metric $c^{-2}\d x^2$, then the bi-characteristic curves are the unit speed geodesics in $T\Omega$ issued form $(x,\hat\xi)$. These curves will eventually leave $\Omega$ if we assume that $c^{-2}\d x^2$  
is non-trapping. The latter means that all geodesics through $\bar\Omega$ are of finite length, and we assume it from now on. We show below that a singularity can be detected if and only if $\gamma_{x,\pm\xi}(t)$ hits some of the planes $\pi_\omega$ perpendicularly. 
There are exactly two values of $t$, say $t_\pm=t_{\pm}(x,\xi)$, such that $\gamma_{x,\pm\xi}(t)$ hits a tangent plane of $\mathbb{S}^{n-1}$ perpendicularly at $t=t_\pm$. Define
$$
T_1:=\frac{1}{2}\sup_{(x,\xi)\in S^\ast\Omega\backslash 0}|t_+(x,\xi)-t_-(x,\xi)|.
$$
We show below that this is the sharp time for the stability. Notice that the non-trapping assumption on $c$ is equivalent to  $T_1<\infty$.

\subsection{Stability analysis for $M$} 

We show that $M$ is a Fourier integral operator (FIO) and calculate its canonical relation. 
We will present first some heuristic arguments first which can be used as a basis for an alternative proof but that would require some geometric assumptions which are not needed for our results below. 
 The singularities of the kernel $M(t, \omega,y)$ of $M$ can be described in the following way. For $y$ fixed, the solution $u(t,x)$ corresponding to $f= \delta_y(x)$ has singular support on the geodesic sphere $\dist(x,y)=t$, where $\dist$ is the distance in the metric. Those spheres would be smooth only if (i) $y$ does not have conjugate points.   The wave front set would be conormal to it. Now, integrating over the plane $\pi_\omega$ for $t>0$ fixed would create a singularity only if that plane is tangent to the geodesic sphere (when the latter is smooth). Therefore, $M$ is singular on the manifold
\[
Z := \{ (t,\omega,y);\; \dist(x,\pi_\omega)=t \}, 
\]
 when (ii)  there is a unique minimizing geodesic realizing that distance. This, in particular implies that $\omega$ is equal to the unit tangent to that geodesic at the intersection point with $\pi_\omega$, and that the geodesic hits $\pi_\omega$ perpendicularly, see Figure~\ref{fig}.   Then $M$ must be an FIO with a Lagrangian $N^*Z$. One can use this to prove the results below   under the  assumptions (i), (ii) above, and to get the visibility condition below. This description resembles the double fibration formalism in integral geometry. In particular, we see (under the assumptions that we remove below) that a singularity $(x,\xi)$ can only be detected by $Mf$ near some $(t,\omega)$ if  $\gamma_{x,\hat\xi}$  hits the plane $\pi_\omega$ perpendicularly at time $t$ or $-t$. As we see below, (i) and (ii) are not needed, and in general, the Lagrangian associated with $M$ and $M$ is not of conormal type $N^*Z$.

\begin{figure}[h!] 
  \centering
  \includegraphics[trim = 0mm 0mm 0mm 0mm, clip, scale=0.6]{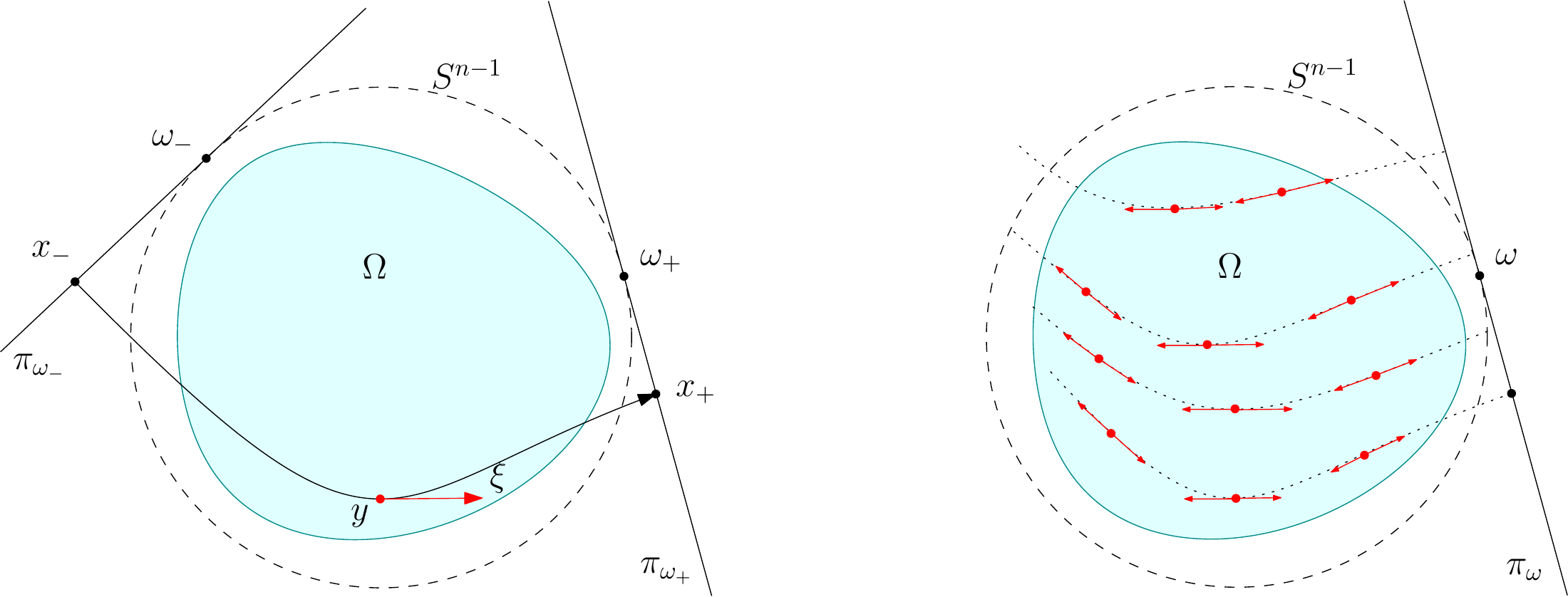}
\caption{\small Left:  The canonical relations $C_\pm: (y,\xi)\to (t_\pm,\omega_\pm, *,*)$, where the dual variables are denoted by $*$. The unit speed geodesic through $(x,\xi)$ hits $\pi_{\omega_\pm}$ perpendicularly at $x_\pm$ at time $\pm t_\pm>0$. Thus the singularity $(x,\xi)$ can be detected by measurements on $\pi_{\omega_\pm}$ at time $t=t_\pm$.  The points $x_\pm$ are determined by the dual variables $(*,*)$. 
Right: Singularities visible from a neighborhood of a single $\pi_\omega$, $T\gg1$. 
}
\label{fig}
\end{figure}

We begin by constructing a parametrix to the problem \eqref{IVP}, see also \cite{SU2011}.  Fix $x_0\in\Omega$, in a neighborhood of $(t,x)=(0,x_0)$ the solution of \eqref{IVP} is given by
\begin{equation} \label{GOS}
u(t,x)=(2\pi)^{-n} \sum_{\sigma=\pm} \int e^{i\phi_\sigma(t,x,\xi)} a_\sigma(t,x,\xi) \hat{f}(\xi) \,\d\xi
\end{equation}
modulo smooth terms. Here the phase functions $\phi_\pm$ are positively homogeneous of order $1$ in $\xi$ and solve the eikonal equations
$$\pm \partial_t \phi_\pm + c(x)|d_x\phi_\pm|=0, \quad\quad \phi_\pm|_{t=0}=x\cdot\xi$$
where $|\cdot|$ is the Euclidean norm. The amplitudes $a_\pm$ are classical of order $0$ and solve the corresponding transport equations with initial conditions $a_\pm(0, x,\xi)=\frac{1}{2}$ \cite[eqn. VI.1.50]{T1980_1}. In particular, in the asymptotic expansion $a_\pm\sim\sum_{j\geq 0} a^{(-j)}_\pm$ with $a^{(-j)}_\pm$ homogeneous in $\xi$ of order $-j$, the leading terms $a^{(0)}_\pm$ satisfies the following homogeneous transport equation and initial conditions
\begin{equation}\label{8}
\left(\partial_t - c^2(x) (\partial_j\phi_\pm) \partial_j + C_\pm \right) a^{(0)}_\pm = 0, \quad\quad a^{(0)}_\pm|_{t=0}=\frac{1}{2},
\end{equation}
where $C_\pm$ are smooth multiplication terms.

To obtain an oscillatory integral representation of the operator $M$, we apply the Radon transform to \eqref{GOS} at $p=1$. We consider only the term with the $+$ sign in \eqref{GOS} and write $\phi:=\phi_+$ and $a:=a_+$ for simplicity of notations. The analysis of the ``$-$'' term is similar. The construction \eqref{GOS} is valid as long as the eikonal equation is solvable. This is always true locally. We assume  that the solution, microlocalized for $f$ with $\WF(f)$ near some $(x_0,\xi^0)$ extends all the way until the geodesics $\gamma_{x_0,\xi^0}$ hits a plane $\pi_\omega$, and even in some neighborhood of that interval. This condition can easily removed as in \cite{SU2009}. Then
\begin{align}
Mf(t,\omega)& =  (2\pi)^{-n} \int_{x\cdot\omega=1} \int e^{i\phi(t,x,\xi)}a(t,x,\xi) \hat{f}(\xi) \,\d\xi \,\d S(x) \nonumber \\
 &= (2\pi)^{-n} \iint e^{i\phi(t,x,\xi)}a(t,x,\xi) \hat{f}(\xi) \delta(1-x\cdot\omega) \,\d\xi \,\d x \nonumber \\
 &= (2\pi)^{-n-1} \iiint e^{i\phi(t,x,\xi)+i\lambda(1-x\cdot\omega)}a(t,x,\xi) \hat{f}(\xi) \,\d\xi\, \d x \,\d\lambda.  \label{Mp}
\end{align}

Write $\hat f(\xi) = \int e^{-i y\cdot\xi} f(y)\d y$. Then the phase function becomes 
$$\Phi(t,\omega, y; x,\lambda, \xi):=\phi(t,x,\xi)+\lambda(1-x\cdot\omega)-y\cdot\xi.$$ 
Here the spatial variables are $(t,\omega, y)$ and the fiber variables are $\theta:= (x,\lambda, \xi)$. The issue with $\Phi$ is that it is not homogeneous of degree $1$ with respect to $x$. This can be resolved by introducing $\tilde{x}:=x|(\xi,\lambda)|$ with $|(\xi,\lambda)|:=(|\xi|^2+|\lambda|^2)^{\frac{1}{2}}$ and defining a new phase function (see \cite[Proposition 21.2.19]{H1985})
$$\tilde{\Phi}(t,\omega, y; \tilde{x},\lambda, \xi):=\Phi\Big(t,\omega, y; \frac{\tilde{x}}{|(\xi,\lambda)|}, \lambda, \xi\Big).$$
It is easy to see that when $(x, \lambda, \xi)\neq 0$, $\tilde{\Phi}$ is smooth, homogeneous of degree $1$ in the fiber variables, and $\tilde{\Phi}_{(t,\omega,x,\lambda, \xi)}$ and $\tilde{\Phi}_{(y,x,\lambda, \xi)}$ are non-vanishing, thus $\tilde{\Phi}$ is a phase function in the sense of \cite[VI.2]{T1980_1}.

Making a change of variable $x\mapsto \tilde{x}$ in \eqref{Mp} one obtains
$$Mf(t,\omega)=(2\pi)^{-n-1}\iiint e^{i\tilde{\Phi}(t,\omega,y; \tilde{x}, \lambda, \xi)}\tilde{a}(t,\tilde{x},\lambda,\xi) \hat{f}(\xi) \, \d \xi \d \tilde{x} \d \lambda$$
where $\tilde{a}(t,\tilde{x},\lambda, \xi):=a(t,\frac{\tilde{x}}{|(\xi,\lambda)|},\xi)|(\xi,\lambda)|^{-n}$ is the new amplitude. This indicates that $M$ is an elliptic FIO of order $\frac{1-n}{2}$ \cite[Definition 3.2.2]{H1971}.

Next we compute the canonical relation of $M$ and show that it is a local graph. Since by the chain rule 
$$\tilde{\Phi}_\xi=\Phi_\xi+\Phi_x \left(\frac{\tilde{x}}{|(\xi,\lambda)|}\right)_\xi, \quad\quad \tilde{\Phi}_{\tilde{x}}=\Phi_x \frac{1}{|(\xi,\lambda)|}, \quad\quad \tilde{\Phi}_\lambda=\Phi_\lambda + \Phi_x \left(\frac{\tilde{x}}{|(\xi,\lambda)|}\right)_\lambda,$$
the replacement of $\Phi$ by $\tilde{\Phi}$ does not affect the  characteristic manifold $\Sigma := \{\Phi_\theta=0\}$:
\begin{align*}
\Sigma & =\{(t,\omega, y; x, \lambda, \xi): \Phi_\xi=0, \Phi_x=0, \Phi_\lambda=0\} \\
 & =\{(t,\omega, y; x, \lambda, \xi): y=\phi_\xi, \phi_x=\lambda\omega, x\cdot\omega=1\}.
\end{align*}
By the geometric optics construction, see, e.g.,  \cite[VI.2 Example 2.1]{T1980_1}, one sees that $y=\phi_\xi$ implies that $x$ is on the geodesic $\gamma_{y,\hat\xi}$ issued from $(y,\hat\xi)$, where $\hat\xi = \xi/(c|\xi| )$ is the unit covector in the metric identified with a unit vector,  and $(\gamma_{y,\hat\xi}(t),c|\xi| \dot{\gamma}_{y,\hat\xi}(t))=(x,\phi_x)$. The condition $x\cdot\omega=1$ means $x$ is the intersection of the geodesic $\gamma_{y,\xi}$ and the plane $x\cdot\omega=1$, as a result $t=t_+(y,\xi)$ is the time of the intersection. The condition $\phi_x=\lambda\omega$ says the tangent vector $\dot{\gamma}_{y,\xi}(t)$ is in the direction of  $\omega$, i.e., the geodesic $\gamma_{y,\xi}$ hits the plane $x\cdot\omega=1$ perpendicularly, see Figure~\ref{fig}.  
As the intersection occurs outside of $B(0,1)$ and $c=1$ there, one sees that $\lambda=c(y)|\xi| |\dot{\gamma}_{y,\hat\xi}(t)| = c(y)|\xi|$ and $\omega=\widehat{\dot{\gamma}}_{y,\hat{\xi}}(t)$ where $|\cdot|$ is the Euclidean norm. 
If we denote the time that $\gamma_{y,\hat{\xi}}$ hits $\partial B(0,1)$ by $t_0=t_0(y,\xi)$, then we also have 
 $\omega=\widehat{\dot{\gamma}}_{y,\hat{\xi}}(t_0)$ since $\gamma_{y,\hat{\xi}}$ is a straight line outside of $B(0,1)$. 
This argument shows that $\Sigma$ is a smooth manifold parameterized by $(y,\xi)$ and hence of dimension $2n$. 

We include the phase function $\phi_-$ now, as well, and call the corresponding characteristic variety $\Sigma_-$.  Then the corresponding time of intersection with the plane $\pi_\omega$ is $t=t_-(x,\xi)<0$. Also, $\dot\gamma$ at this time points in the opposite direction of $\omega$, therefore, $\lambda$ changes sign. 
Therefore,  each of the maps  (for $\Phi$ associated with $\phi_\pm$)
$$\Sigma_\pm \ni (t,\omega, y; x,\lambda, \xi) \longmapsto 
 (t,\omega,y; \Phi_t, \Phi_\omega,\Phi_y)
=(t,\omega,y; \mp c(y)|\xi|, \mp c(y)|\xi|(x - \omega), -\xi)$$
is smooth of rank $2n$ at any point, thus $\Phi$ is a non-degenerate phase \cite[VIII.1]{T1980_1} and the canonical relation is
a local graph given by
\begin{align*}
C: = & \{( t,\omega, \mp c(y)|\xi|, \mp c(y)|\xi|(x - \omega); y,\xi), \quad (t,\omega, y; x,\lambda, \xi)\in\Sigma\} \\
 = & \left\{( t_\pm(y,\xi), \pm\widehat{\dot{\gamma}}_{y,\xi}(t(y,\xi)), \mp c(y)|\xi|,\mp c(y)|\xi|\Big(\gamma_{y,\xi}(t(y,\xi)) - \widehat{\dot{\gamma}}_{y,\xi}(t(y,\xi))\Big); y,\xi), \right. \\
 &  \quad\quad\quad\quad\quad \left. (y,\xi)\in T^\ast\Omega\backslash 0\right\}
\end{align*}
Note that $x-\omega$ is the projection of $x\in \pi_\omega$ on the tangent space $T_\omega\mathbb{S}^{n-1}$, which is also the derivative of $x\cdot\omega$ with respect to $\omega\in\mathbb{S}^{n-1}$. 

Putting the above analysis together, we showed

\begin{thm} \label{FIO}
The operator $M= M_+ + M_-$, where $M_\pm$ are elliptic Fourier integral operators of order $\frac{1-n}{2}$ with canonical relations given by the graphs of the maps
$$
C_\pm: (y,\xi) \longmapsto \left(t, \pm\widehat{\dot{\gamma}}_{y,\xi}(t), \mp c(y)|\xi|, -c(y)|\xi|\big(\gamma_{y,\xi}(t) - \widehat{\dot{\gamma}}_{y,\xi}(t)\big) \right), \quad t=  t_\pm(y,\xi). 
$$
\end{thm}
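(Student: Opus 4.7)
My plan is essentially to collect the calculations already performed in the paragraphs preceding the theorem into a coherent argument, organizing the work as follows.

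First, I would start from the geometric optics parametrix (9) for the solution $u(t,x)$ of the Cauchy problem (1), valid modulo smoothing as long as the eikonal equation $\pm\partial_t\phi_\pm + c(x)|d_x\phi_\pm|=0$ with $\phi_\pm|_{t=0}=x\cdot\xi$ can be solved. I would microlocalize $f$ near an arbitrary $(x_0,\xi^0)$ so that the construction extends along the geodesic $\gamma_{x_0,\xi^0}$ until it reaches a plane $\pi_\omega$ (as in \cite{SU2009}); a partition of unity then handles the general case. Applying the Radon transform at $p=1$ and writing $\delta(1-x\cdot\omega)=(2\pi)^{-1}\int e^{i\lambda(1-x\cdot\omega)}\d\lambda$, I obtain the oscillatory integral representation of $M_\pm f$ with phase $\Phi_\pm=\phi_\pm(t,x,\xi)+\lambda(1-x\cdot\omega)-y\cdot\xi$ in fiber variables $\theta=(x,\lambda,\xi)$, as in \eqref{Mp}.

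The next step is to replace $\Phi_\pm$ by the homogenized phase $\tilde\Phi_\pm$ via the substitution $\tilde x=x|(\xi,\lambda)|$, so that the rescaled phase becomes smooth and homogeneous of degree one in the full fiber variable $(\tilde x,\lambda,\xi)$; I would verify that $\tilde\Phi_{\pm,(t,\omega,x,\lambda,\xi)}$ and $\tilde\Phi_{\pm,(y,x,\lambda,\xi)}$ are non-vanishing so that $\tilde\Phi_\pm$ is a non-degenerate phase in the sense of \cite{T1980_1}. Together with the fact that the amplitude $\tilde a_\pm(t,\tilde x,\lambda,\xi)=a_\pm(t,\tilde x/|(\xi,\lambda)|,\xi)|(\xi,\lambda)|^{-n}$ is classical of order $-n$ in $2n+1$ fiber variables, a standard count (amplitude order plus $(2n+1)/2$ minus $(2n+1)/2$ minus $(n+1)/2$ for $n+1$ base variables) yields the claimed FIO order $(1-n)/2$. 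Ellipticity follows from $a^{(0)}_\pm|_{t=0}=1/2$ and the fact that the transport equation (8) preserves nonvanishing of $a^{(0)}_\pm$ along the bicharacteristic flow, so $\tilde a_\pm$ does not vanish on $\Sigma_\pm$.

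The key geometric step is the identification of the canonical relation. Since the change of variables does not affect $\Sigma_\pm=\{\Phi_{\pm,\theta}=0\}$, I would solve the three equations $\Phi_\xi=0$, $\Phi_x=0$, $\Phi_\lambda=0$, which read $y=\partial_\xi\phi_\pm$, $\partial_x\phi_\pm=\lambda\omega$, and $x\cdot\omega=1$. By the standard Hamilton--Jacobi interpretation, the first equation places $x$ on the geodesic $\gamma_{y,\hat\xi}$ through $(y,\hat\xi)$ and identifies $\partial_x\phi_\pm$ with the covector dual to $\pm c|\xi|\dot\gamma$; the second then forces $\dot\gamma$ to be parallel to $\omega$, i.e.\ the geodesic hits $\pi_\omega$ perpendicularly; the third fixes the time of intersection $t=t_\pm(y,\xi)$. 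Using $c\equiv 1$ on $\pi_\omega$, I would read off $\lambda=\pm c(y)|\xi|$ and $\omega=\pm\widehat{\dot\gamma}_{y,\hat\xi}(t_\pm)$, so that $\Sigma_\pm$ is smoothly parametrized by $(y,\xi)\in T^*\Omega\setminus 0$ and has dimension $2n$.

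Finally, evaluating the map $(t,\omega,y;\theta)\mapsto(t,\omega,y;\Phi_t,\Phi_\omega,\Phi_y)$ on $\Sigma_\pm$, I compute $\Phi_t=\partial_t\phi_\pm=\mp c(y)|\xi|$ and $\Phi_\omega=-\lambda x$; projecting the latter onto $T_\omega \mathbb{S}^{n-1}$ (equivalently, differentiating $1-x\cdot\omega$ in $\omega\in\mathbb{S}^{n-1}$) gives $\mp c(y)|\xi|(x-\omega)$, while $\Phi_y=-\xi$. This yields the canonical relations $C_\pm$ as stated, and the fact that the map is of full rank $2n$ in $(y,\xi)$ shows that $C_\pm$ is a local canonical graph. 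The only place where care is required is in justifying the global validity of the parametrix along $\gamma_{y,\hat\xi}$ up to $\pi_\omega$, which I would handle exactly as in the remark following (9), microlocally near each $(x_0,\xi^0)$, so conjugate points do not obstruct the construction; this is the main technical point, but it is already addressed in the paper.
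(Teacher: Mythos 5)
Your proposal is correct and follows essentially the same route as the paper: geometric optics parametrix, Radon transform at $p=1$ with $\delta(1-x\cdot\omega)$ written as an oscillatory integral in $\lambda$, homogenization of the phase via $\tilde x=x|(\xi,\lambda)|$, identification of $\Sigma_\pm$ through the equations $y=\phi_\xi$, $\phi_x=\lambda\omega$, $x\cdot\omega=1$, and the rank-$2n$ computation of $(t,\omega,y)\mapsto(\Phi_t,\Phi_\omega,\Phi_y)$ giving the graphs $C_\pm$. The only blemish is the parenthetical order count, which as written ("minus $(n+1)/2$ for $n+1$ base variables") does not produce $(1-n)/2$; the correct bookkeeping is $\mu+N/2-(n_X+n_Y)/4=-n+(2n+1)/2-2n/4=(1-n)/2$ with $2n$ base variables in total, though the conclusion you state is the right one.
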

The canonical relations above are of the form $(y,\xi) \mapsto ( t,\omega,\tau,\omega^\sharp )$, where $(\tau,\omega^\sharp)$ are duals to $(t,\omega)$.

\textbf{Remark:} Another way to see that $M$ is a Fourier integral operator is to regard it as the composition of the solution operator of the wave equation and the  Radon transform w.r.t.\ $x$ at $p=1$.

The stability of the determination follows from the above theorem. We introduce a cut-off function $\chi\in C^\infty_0(0,T)$ so that 
$\chi>0$ on $[0,T_1]$
  and model the finite time measurement with $\chi Mf$. This way, we can simply define the fractional Sobolev norm of $Mf$ below by extending $Mf$  as zero for all $t$.

\begin{thm} \label{thm_stability}
Suppose $\supp f\subset\bar\Omega\subset  {B(0,1)}$ and $T>T_1$. Then we have the stability estimate
$$\|f\|_{H^1(\Omega)} \leq C \|\chi Mf\|_{H^{\frac{1+n}{2}}((0,T)\times\mathbb{S}^{n-1})}$$
for some constant $C>0$ independent of $f$.
\end{thm}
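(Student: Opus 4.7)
The plan is to apply the FIO calculus from Theorem~\ref{FIO}. Set $A := \chi M$; I would show that $A^*A$ is an elliptic pseudodifferential operator of order $1-n$ on $T^*\Omega\setminus 0$, construct a $\Psi$DO parametrix, and close the estimate by a compactness--uniqueness argument invoking Theorem~\ref{thm_unique}. Since $u(t,\cdot)$, hence $Mf(t,\cdot)$, is even in $t$, the measurement norm on $(0,T)$ is equivalent up to constants to the corresponding norm of its even extension to $(-T,T)$, so I would carry out the FIO composition on the symmetric interval $(-T,T)$ with an even cutoff positive on $[-T_1, T_1]$ and transfer the final estimate back.

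For the ellipticity of $A^*A$, decompose $M = M_+ + M_-$. Along $C_\pm$ the cotangent variable dual to $t$ equals $\tau = \mp c(y)|\xi|$, so the images of $C_+$ and $C_-$ lie in the disjoint half-spaces $\{\tau<0\}$ and $\{\tau>0\}$; the cross compositions $M_\mp^*\chi^2 M_\pm$ have empty canonical relations and hence are smoothing. Each diagonal piece $M_\pm^*\chi^2 M_\pm$ is a classical $\Psi$DO of order $1-n$ with nonnegative principal symbol that is strictly positive exactly when $\chi(t_\pm(y,\xi))\neq 0$. The hypothesis $T > T_1 = \tfrac12\sup|t_+-t_-|$ yields $\min(t_+(y,\xi), |t_-(y,\xi)|) \leq T_1$ at every $(y,\xi) \in T^*\Omega\setminus 0$, so by the choice of $\chi$ at least one of the two diagonal principal symbols is positive at every fibre direction; thus $A^*A$ is elliptic of order $1-n$ modulo smoothing. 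Choosing a $\Psi$DO parametrix $Q$ of order $n-1$, one has $QA^*A = I + K$ with $K$ smoothing, and the mapping properties of $A^*$ give
\begin{equation*}
\|f\|_{H^1(\Omega)} \leq C\|A^*Af\|_{H^n} + C\|Kf\|_{H^1} \leq C\|\chi Mf\|_{H^{(n+1)/2}((0,T)\times\mathbb{S}^{n-1})} + C\|f\|_{L^2(\Omega)}.
\end{equation*}

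To eliminate the lower-order remainder, I would argue by contradiction: if the conclusion failed there would exist $f_k$ with $\|f_k\|_{H^1}=1$ and $\|\chi Mf_k\|_{H^{(n+1)/2}}\to 0$. By the Rellich compactness theorem a subsequence converges in $L^2$, and the inequality above then promotes this to $H^1$-convergence to some $f$ with $\|f\|_{H^1}=1$ and $\chi Mf \equiv 0$. Since $\chi > 0$ on $[0,T_1]$ and $T_1 \geq T_0(\Omega, \mathbb{S}^{n-1})$ (a short geometric check: the distance from any $x$ to its nearest tangent plane is realized by a perpendicular geodesic, and this distance is at most both $t_+$ and $|t_-|$, hence at most $T_1$), Theorem~\ref{thm_unique} applies and forces $f\equiv 0$, the required contradiction.

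The main obstacle is the ellipticity step: one needs simultaneously that the cross-terms $M_\mp^*\chi^2 M_\pm$ are smoothing (ensured by the opposite signs of $\tau$ along $C_\pm$) and that the two nonnegative diagonal contributions cover every fibre direction of $T^*\Omega\setminus 0$. This is where the sharp threshold $T > T_1$ and the time-evenness of $Mf$ enter essentially: without them some directions would escape both branches within the measurement window, and $A^*A$ would fail to be elliptic at those directions.
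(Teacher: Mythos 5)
Your proof is correct and follows essentially the same route as the paper: establish that $M^*\chi M$ (equivalently $(\chi M)^*(\chi M)$) is an elliptic \PDO\ of order $1-n$ near $\Omega$, deduce the estimate with an $L^2(\Omega)$ remainder from elliptic regularity and the mapping properties of $M^*$, and remove the remainder by combining the injectivity from Theorem~\ref{thm_unique} with a compactness argument. The only difference is that you spell out details the paper merely asserts or cites --- the smoothing of the cross terms via the opposite signs of $\tau$ on $C_\pm$, the covering of every codirection via $\min(t_+,|t_-|)\le \tfrac12(t_++|t_-|)\le T_1$, and the inequality $T_0\le T_1$ needed to invoke uniqueness --- and all of these check out.
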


\begin{proof}
Since $M$ is an elliptic FIO of order $\frac{1-n}{2}$ associated to the canonical graphs $C_\pm$, its adjoint $M^\ast$ is also an elliptic FIO of the same order associated to the canonical graphs $C^{-1}_\pm$.  
Thus $M^\ast \chi M$ is an elliptic pseudodifferential operator of order $1-n$ in a neighborhood of $\Omega$ with a positive homogeneous principal symbol on the unit cotangent bundle. It follows from the elliptic regularity estimate and the mapping property of $M^\ast$ that
$$\|f\|_{H^1(\Omega)} \leq C(\; \|\chi M f\|_{H^{\frac{1+n}{2}}((0,T)\times\mathbb{S}^{n-1})} + \|f\|_{L^2(\Omega)} \;).$$ 
Since Theorem \ref{thm_unique} implies that $\chi M$ is injective on $H_D(\Omega)$, by \cite[Proposition V.3.1]{T1981} we can get rid of the last term on the right and obtain the desired estimate, with possibly a different constant $C>0$.
\end{proof}

In the same way, one can prove an $L^2\to H^{(n-1)/2}$ estimate as well here; and also in the theorem below. Note that those estimates are in sharp norms, since $M$ is an FIO of order $(1-n)/2$ associated with a local canonical diffeomorphism.

Next we generalize the above theorem to the partial data case. Suppose $\Gamma\subset\mathbb{S}^{n-1}$ is as in Theorem \ref{thm_unique}, and suppose the function $f$ is always supported in some fixed compact set $K\subset\Omega$. In order to ensure the detection of all the singularities by the planes in $\Gamma$ we require
\begin{equation} \label{sta_cond}
\forall (y,\xi)\in S^\ast K, \; (t_\sigma(y,\xi), \dot{\gamma}_{y,\xi}(t_\sigma(y,\xi)))\in (0,T)\times\Gamma \text{ for at least one of } \sigma = + \text{ or } \sigma = -.
\end{equation}
Let $T_1(\Gamma,K)$ be 
the infimum of  $T$ for which \r{sta_cond} holds, and fix $T>T_1$. By compactness argument, \r{sta_cond} remains true if we replace $\Gamma$ with a compact subset $\Gamma_K$. Choose   $\chi\in C^\infty_0((0,T)\times\Gamma)$ so that $\chi>0$ on $[0,T_1]\times\Gamma_K$. We  model the partial measurement by  $\chi Mf$. Similar reasoning as above yields the following partial data stability result.

\begin{thm} \label{thm_stability_partial}
Suppose $K\subset\Omega$ is a fixed compact set and $T>T_1(\Gamma,K)$. Then we have the stability estimate for $f$ with $\supp f\subset K$:
$$\|f\|_{H_D(K)} \leq C \|\chi Mf\|_{H^{\frac{1+n}{2}}((0,T)\times\Gamma)}$$
for some constant $C>0$ independent of $f$.
\end{thm}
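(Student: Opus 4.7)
The plan is to follow the strategy of Theorem~\ref{thm_stability} while tracking how condition~\eqref{sta_cond} governs the ellipticity of the normal operator. By Theorem~\ref{FIO}, $M=M_++M_-$ with $M_\pm$ elliptic FIOs of order $(1-n)/2$ whose canonical graphs $C_\pm$ have $\tau$-component $\mp c(y)|\xi|$. Since these two graphs land in disjoint conic halves of $T^*((0,T)\times\mathbb{S}^{n-1})$, the cross compositions $M_\pm^*\chi^2M_\mp$ are smoothing, and the normal operator
\[
(\chi M)^*(\chi M)=M_+^*\chi^2M_++M_-^*\chi^2M_-+\text{smoothing}
\]
is a $\Psi$DO of order $1-n$.

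Next I would verify that this $\Psi$DO is elliptic on $T^*K\setminus 0$. The principal symbol of $M_\sigma^*\chi^2M_\sigma$ at $(y,\xi)$ is nonnegative and strictly positive precisely when $C_\sigma(y,\xi)\in\{\chi>0\}$. By the compactness argument recorded in the preamble to the theorem, \eqref{sta_cond} continues to hold after replacing $\Gamma$ with the compact subset $\Gamma_K$, and because $\chi>0$ on $[0,T_1(\Gamma,K)]\times\Gamma_K$, for every $(y,\xi)\in S^*K$ at least one of $C_+(y,\xi),C_-(y,\xi)$ falls inside $\{\chi>0\}$. Thus the total principal symbol is strictly positive on $S^*K$ and the operator is elliptic on $T^*K\setminus 0$.

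Elliptic regularity and continuity of $M^*$ will then yield
\[
\|f\|_{H_D(K)}\le C\bigl(\|\chi Mf\|_{H^{(1+n)/2}((0,T)\times\Gamma)}+\|f\|_{L^2(K)}\bigr)
\]
for $f$ supported in $K$. To absorb the $L^2$ remainder via \cite[Proposition V.3.1]{T1981} I need injectivity of $\chi M$ on such $f$. I would argue: $\chi Mf=0$ forces $Mf=0$ on $[0,T_1]\times\Gamma_K$; since $t_\sigma(y,\xi)$ is by construction the length of a geodesic from $y$ realizing a perpendicular hit with $\pi_{\omega_\sigma}$, we have $\dist(y,\pi_{\omega_\sigma})\le t_\sigma(y,\xi)$, so \eqref{sta_cond} places each $y\in K$ inside the domain of influence $\Omega_{\Gamma_K}$ associated with time $T_1$. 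Applying Theorem~\ref{thm_unique} on an open neighborhood of $\Gamma_K$ contained in $\Gamma$ then forces $f=0$, completing the absorption.

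The main obstacle I anticipate is the bookkeeping that links the global FIO structure of $M$ to partial-data ellipticity on $S^*K$: the disjointness of the images of $C_\pm$ in the $\tau$-direction (which makes the cross terms smoothing rather than honest FIOs) and the perpendicular-geodesic interpretation of $t_\sigma$ (which aligns the stability time $T_1(\Gamma,K)$ with the uniqueness input required for the $L^2$ absorption step) are the two structural facts that make the plan go through; everything else is a verbatim adaptation of the full-data proof.
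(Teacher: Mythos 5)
Your proposal is correct and follows essentially the same route as the paper, which gives no separate argument for the partial-data case beyond ``similar reasoning'' to Theorem~\ref{thm_stability}: form the normal operator, use the visibility condition \eqref{sta_cond} on the compact set $\Gamma_K$ to get ellipticity on $T^*K\setminus 0$, apply elliptic regularity, and absorb the $L^2$ remainder via injectivity and \cite[Proposition V.3.1]{T1981}. The details you supply (the cross terms $M_\pm^*\chi^2 M_\mp$ being smoothing because the ranges of $C_\pm$ are separated by the sign of $\tau$, and the bound $\dist(y,\pi_{\omega_\sigma})\le t_\sigma$ linking \eqref{sta_cond} to the domain of influence in Theorem~\ref{thm_unique}) are exactly the facts the paper uses elsewhere and leaves implicit here.
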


\begin{example}\rm 
An example of stable set $\Gamma$ is the following. Let $c=1$ and let $\Gamma$ be any open set on  $\mathbb{S}^{n-1}$ so that $\Gamma \cup (- \Gamma) = \mathbb{S}^{n-1}$. One choice is some neighborhood of a closed hemisphere. Then every lone through the unit ball intersects $\Gamma$, and $T>2$ with that $\Gamma$ implies stability. In this case ($c=1$) $Mf$ relates directly to the Radon transform, see the proof of Theorem~\ref{thm_unique}, therefore the stability condition reduces to well known properties of the Radon transform for $c$ constant.
\end{example}

\subsection{Stability analysis for the reflectors model} We analyze here the stability of recovery $f$ given $Nf$. We show that we can reduce the analysis to the one above. 

The method of reflections we used to prove uniqueness does not work anymore when the reflected wave intersects the region where $c$ is variable. Microlocally however, reflections work in the following sense. Singularity hitting $\pi_\omega$ is never tangent to it and it would reflect from it according to the laws of geometric optics. The leading amplitude in \r{GOS} will preserve its value and sign on the plane (and would alter the sign if we had Dirichlet boundary conditions). It may hit the same plane again at a later time. If it does not, the contribution of that reflected way to $Nf$ is a smoothing operator. On the other hand, then $Nf$ equals $2Mf$ up to a smoothing term, so we have essentially the same microlocal information as above. We make this more precise below.

As we mentioned above, it is convenient to make the following assumption:

\medskip 
(\textbf{H}) There is no geodesic in the metric $c^{-2}\d x^2$ of length $T$ with endpoints on some of the planes $\pi_\omega$, normal at it at both endpoints. 
\medskip 

This condition holds when $c$ is close enough to a constant, for example. It is not really necessary for the analysis since we can use the methods in \cite{SY2015} then. It makes the exposition simpler however. 

In this case, the geometric optics construction is well known. We start with   \r{GOS}, and extend it microlocally until the singularities hit $\pi_\omega$, and go a bit beyond it. Call this solution $u_0$. Then we find the boundary trace of $u_0$ on $\pi_\omega$ and construct a parametrix $u_R$ with that trace propagating into the future. We refer to \cite{SY2015}, for example, for more details. Then $u=u_0+u_R$ is the desired parametrix. Its singularities issued at normal directions never come back at normal directions again, by (H).  For its boundary values, we have $u|_{\pi_\omega}= 2u_0|_{\pi_\omega}$ and this is true for all $t$ by (H). This yields the following.

\begin{thm} \label{thm_MN}
$2M-N$ is a smoothing operator. 
\end{thm}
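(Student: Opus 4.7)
The plan is to construct a microlocal parametrix for the Neumann problem \r{IVP2} of the form $u=u_0+u_R$ modulo a smoothing error, and then to verify that its averaged Dirichlet trace on $\pi_\omega$ agrees with $2Mf$ up to a smooth function of $(t,\omega)$. First, I would take $u_0$ to be the free-space geometric optics solution \r{GOS} for \r{IVP}, extended microlocally along the bicharacteristics of $\partial_t^2-c^2\Delta$ until they reach $\pi_\omega$ and slightly beyond; this extension is unobstructed because at the moment of intersection each relevant bicharacteristic is normal to $\pi_\omega$ (in particular transversal). Next, I would construct a reflected WKB term $u_R$ whose phase on $\pi_\omega$ coincides with $\phi_0$ but has opposite normal derivative, and whose amplitude is determined order by order by the requirement that $\omega\cdot\nabla(u_0+u_R)|_{\pi_\omega}=0$ holds modulo $C^\infty$. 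Because Neumann reflection preserves (rather than flips) the amplitude at every order, this gives $u_R|_{\pi_\omega}\equiv u_0|_{\pi_\omega}\pmod{C^\infty}$, hence
\[
(u_0+u_R)|_{\pi_\omega}\equiv 2 u_0|_{\pi_\omega}\pmod{C^\infty},
\]
and $u_0+u_R$ coincides with the actual Neumann solution $u$ modulo $C^\infty$.

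Next I would invoke assumption (\textbf{H}) to rule out any further singular contributions. By the canonical relation computation in Theorem~\ref{FIO}, the integral $\int_{\pi_\omega}v\,\d S$ of a Lagrangian distribution $v$ is singular in $(t,\omega)$ only at those $(t,\omega)$ for which some wave-front direction of $v$ on $\pi_\omega$ is conormal to $\pi_\omega$, i.e.\ at a point of normal incidence. After the initial reflection, the bicharacteristics of $u_R$ propagate along reflected geodesics; were any such geodesic to return normally to $\pi_\omega$ at some time $\le T$, it would produce a geodesic of length $\le T$ with both endpoints on $\pi_\omega$ and normal there, which is exactly what (\textbf{H}) prohibits. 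Any later, necessarily non-normal, re-encounter of $u_R$ with $\pi_\omega$ therefore contributes a smooth term to $\int_{\pi_\omega}u_R\,\d S$, so the only singular contribution from $u_R$ to the averaged trace comes from the instant of first reflection.

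Combining the two steps,
\[
Nf(t,\omega)=\int_{\pi_\omega}(u_0+u_R)\,\d S \;\equiv\; 2\int_{\pi_\omega}u_0\,\d S \;=\; 2Mf(t,\omega)\pmod{C^\infty}
\]
on the entire range $(0,T)\times\mathbb{S}^{n-1}$, which gives that $2M-N$ has a $C^\infty$ Schwartz kernel. The main obstacle I expect is the careful bookkeeping of the reflected parametrix: one must verify that $u_0+u_R$ is a parametrix to all orders (not merely at leading WKB order) and that no higher-order reflected terms need to be kept to absorb potential second normal incidences. Assumption (\textbf{H}) is crucial precisely at this point, as it guarantees that the single-reflection construction already available in \cite{SY2015} is adequate and that the missing multiple-reflection contributions are smoothing in $(t,\omega)$.
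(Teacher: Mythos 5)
Your proposal is correct and follows essentially the same route as the paper: construct the parametrix $u_0+u_R$ by microlocal reflection off $\pi_\omega$, note that the Neumann condition doubles the Dirichlet trace, and use assumption (\textbf{H}) to discard later encounters with $\pi_\omega$ as smoothing contributions to the averaged trace. Your added remark that only normal incidences can make $\int_{\pi_\omega}u_R\,\d S$ singular is a slightly more explicit justification of the role of (\textbf{H}) than the paper gives, but it is the same argument.
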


   The analysis above therefore yields the following. 

\begin{thm}\label{thm_stab2}
Under  assumption (H), Theorem~\ref{thm_stability} and Theorem~\ref{thm_stability_partial} remain true for the operator $N$, as well. 
\end{thm}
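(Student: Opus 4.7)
The plan is to bootstrap from the stability for $M$ via the microlocal reduction provided by Theorem~\ref{thm_MN}. By that theorem there is a smoothing operator $R$ such that $N=2M-R$, so for any cutoff $\chi$ as in the hypotheses of Theorem~\ref{thm_stability} or Theorem~\ref{thm_stability_partial},
\[
\chi Mf = \tfrac12\chi Nf + \tfrac12 \chi Rf.
\]
Since $R$ is smoothing and $f$ has compact support, $\chi Rf$ is smooth and compactly supported in $(0,T)\times\mathbb S^{n-1}$ (resp.\ $(0,T)\times\Gamma$), so its $H^{(1+n)/2}$-norm is controlled by the $L^2$-norm of $f$. Consequently
\[
\|\chi Mf\|_{H^{\frac{1+n}{2}}} \le \tfrac12\|\chi Nf\|_{H^{\frac{1+n}{2}}} + C\|f\|_{L^2(\Omega)}.
\]

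First I would invoke Theorem~\ref{thm_stability} (in the full-data case) or Theorem~\ref{thm_stability_partial} (in the partial-data case); in each case the estimate was derived by showing $M^\ast\chi M$ is an elliptic $\Psi$DO of order $1-n$ on a neighborhood of the support of $f$, yielding an a priori bound of the shape
\[
\|f\|_{H_D} \le C\bigl(\|\chi Mf\|_{H^{\frac{1+n}{2}}} + \|f\|_{L^2(\Omega)}\bigr).
\]
Combining this with the previous display gives
\[
\|f\|_{H_D} \le C'\bigl(\|\chi Nf\|_{H^{\frac{1+n}{2}}} + \|f\|_{L^2(\Omega)}\bigr),
\]
which is the desired inequality up to the lower-order term.

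The remaining step is the usual compactness/uniqueness argument to remove the $L^2$ term, carried out exactly as in the proof of Theorem~\ref{thm_stability} but with $M$ replaced by $N$. Here the input is the injectivity of $\chi N$ on $H_D(\Omega)$ (resp.\ $H_D(K)$), which is provided by Theorem~\ref{thm_unique2} under the standing assumption $T>T_0$, together with hypothesis (\textbf{H}) (which is implicit in the finite-time measurement region where we have uniqueness). Applying \cite[Proposition V.3.1]{T1981} absorbs the $\|f\|_{L^2}$ into the left-hand side at the cost of enlarging the constant.

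The only place where something could potentially go wrong is in verifying that the reduction $N=2M-R$ is truly uniform enough to allow a global Sobolev estimate on $(0,T)\times\Gamma$: the parametrix construction in Theorem~\ref{thm_MN} proceeds fiber by fiber in $\omega$, and one must make sure the smoothing remainder is jointly smooth and compactly supported in $(t,\omega)$. Assumption (\textbf{H}) is used precisely to prevent a reflected bicharacteristic from hitting some $\pi_{\omega'}$ perpendicularly again within time $T$, which would produce an additional (non-smoothing) contribution to $Nf$. Once this is in place, the estimate transfers verbatim from $M$ to $N$, establishing both the full-data and partial-data versions simultaneously.
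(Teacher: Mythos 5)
Your proposal is correct and follows essentially the same route the paper intends: the paper's proof of Theorem~\ref{thm_stab2} is just the one-line remark that it follows from the preceding analysis, i.e., from Theorem~\ref{thm_MN} ($2M-N$ smoothing) combined with the stability estimates for $M$ and the removal of the compact $L^2$ error term via the injectivity of $N$ (Theorem~\ref{thm_unique2}) and \cite[Proposition V.3.1]{T1981}. You have simply spelled out the details that the paper leaves implicit, including the correct identification of where assumption (\textbf{H}) enters.
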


\subsection{Visible Singularities}

In this section we study which singularities, i.e., elements of the wave front set of $\WF(f)$ of $f$ can be recovered in stable way from $Mf$ or $Nf$. By that, we mean that they create singularities of $Mf$ or $Nf$, which in turn implies stability estimates in Sobolev spaces.  We  consider the functions $f$ supported in $\bar\Omega$, as before. 
 Since $M$ is an elliptic FIO associated with a local canonical diffeomorphism,  we obtain, see \cite{Hormander4},
$$ \WF{(Mf)} = C \circ \WF{(f)},$$
where $C=C_+\cap C_-$, see Theorem \ref{FIO}.

Let $U$ be a neighborhood of a fixed point $(t_0,\omega_0)$ in $\mathbb{R}\times\mathbb{S}^{n-1}$. A singularity $(y,\xi)\in\WF{(f)}$ is called \emph{visible} from $U$ if it creates a singularity in the limited data $Mf|_{U}$. Next we characterize all the singularities which are visible from $U$. Propagation of singularity theory shows that any $(y,\xi)\in\WF{(f)}$ splits into two singularities and they propagate along the bicharacteristic curves $(\gamma_{y,\hat\xi}(t), c|\xi|\dot{\gamma}_{y,\hat\xi}(t))$. Each singularity is later detected by a plane $\{x\cdot\omega=1\}$ which it hits perpendicularly at time $t$. Thus to trace back to the visible singularities in from a neighborhood of some $(t,\omega)\in U$, we can take all the geodesics issued from the plane $\pi_\omega$ in the direction $-\omega$ and extend them to time $t$, see also Figure~\ref{fig}.  Since $2M$ and $N$ are microlocally equivalent, we get the following. 
\begin{coro}\label{cor9}
The singularities of $f$ which are visible from $U$ for the measurements operators $M$ or $N$  are characterized by
$$
\WF{(f)} \cap \{(\gamma_{x,-\omega}(t),\lambda\dot{\gamma}_{x,-\omega}(t)) : x\cdot\omega=1, \; \lambda\in\mathbb{R}\backslash \{0\}, \;\; (t,\omega) \in U\}.
$$
\end{coro}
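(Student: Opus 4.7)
The plan is to read off the visible singularities from the explicit canonical relations of Theorem~\ref{FIO}, together with the standard microlocal mapping property of FIOs associated with local canonical graphs, and then transfer the conclusion to $N$ via Theorem~\ref{thm_MN}.

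First, I would invoke Theorem~\ref{FIO}: $M = M_+ + M_-$ where $M_\pm$ are elliptic FIOs of order $(1-n)/2$ with canonical relations $C_\pm$ that are graphs of local diffeomorphisms. For such operators one has
\[
\WF(M_\pm f) = C_\pm \circ \WF(f),
\]
and a point $(y,\xi) \in \WF(f)$ is visible from $U$ precisely when at least one of the images $C_+(y,\xi)$ or $C_-(y,\xi)$ has base projection in $U$. Cancellation between the two pieces does not occur: the $\tau$ components $\mp c(y)|\xi|$ have opposite signs on the cotangent fibers, so the two images lie in disjoint open cones of $T^\ast((0,T)\times\mathbb{S}^{n-1})\setminus 0$.

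Second, I would invert the formulas in Theorem~\ref{FIO} geometrically. The base projection of $C_\pm(y,\xi)$ is $(t_\pm(y,\xi),\, \pm \widehat{\dot{\gamma}}_{y,\xi}(t_\pm))$. Fix $(t,\omega) \in U$ and look for its preimages. If $C_+(y,\xi)$ has base $(t,\omega)$, then the unit-speed geodesic $\gamma_{y,\hat\xi}$ meets $\pi_\omega$ perpendicularly at time $t$ at some point $x\in\pi_\omega$. Reversing this geodesic yields $y = \gamma_{x,-\omega}(t)$, and under the metric identification of vectors and covectors, $\xi$ is a nonzero scalar multiple of $\dot\gamma_{x,-\omega}(t)$. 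The branch $C_-$ reproduces the same spatial locus but with the opposite sign of the scalar. Ranging $x$ over $\pi_\omega$ and $\lambda$ over $\mathbb{R}\setminus\{0\}$ therefore yields exactly the set displayed in the corollary.

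Finally, for $N$: Theorem~\ref{thm_MN} states that $2M-N$ is smoothing, so $\WF(Nf) = \WF(Mf)$, and the visible singularities for $N$ coincide with those for $M$. The main obstacle is the sign bookkeeping: one must confirm noncancellation between $M_+$ and $M_-$ (handled by the opposite signs of $\tau$) and check that the single parameter $\lambda\in\mathbb{R}\setminus\{0\}$ captures both branches when written in the form $\lambda \dot\gamma_{x,-\omega}(t)$. With those sign checks in hand, the remainder of the statement is a direct geometric translation of the graphs in Theorem~\ref{FIO}.
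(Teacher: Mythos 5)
Your proposal is correct and follows essentially the same route as the paper: the paper's argument (given in the paragraph preceding the corollary) likewise combines the identity $\WF(Mf)=C\circ\WF(f)$ from Theorem~\ref{FIO} with the geometric inversion of the canonical relations (tracing geodesics from $\pi_\omega$ in the direction $-\omega$) and invokes Theorem~\ref{thm_MN} to pass to $N$. Your explicit non-cancellation check via the opposite signs of $\tau$ is a welcome refinement --- the paper leaves this implicit (and even misprints $C=C_+\cap C_-$ where the union is meant) --- but it does not change the substance of the argument.
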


This corollary can be microlocalized: we can describe the singularities visible from an open conic subset of $T^*(\R\times \mathbb{S}^{n-1})$.  The corollary can also be derived from Theorem~\ref{thm_TR_p} below.

\section{Time Reversal} \label{sec_TR}
In this section, we propose time reversal algorithms which can be implemented numerically in an easy way and recover the visible singularities of $f$.

By the proof of Theorem~\ref{thm_unique}, for every fixed $T>0$, we can recover the Radon transform $[Ru(T,\cdot)](p,\omega)$ of $u(T,\cdot)$ for $p>1$ in an explicit way by $[Ru(T,\cdot)](p,\omega)=Mf(T+1-p,\omega)$, where $Mf(t,\omega)$ is extended as $0$ for $t<0$. We can differentiate this w.r.t.\ $p$, and then we see that we can recover the translation representation $[\mathcal{R}\mathbf{u}(T)] (p,\omega)$ for $p>1$. Recall that the Lax-Phillips translation representation  \cite{LP} of $\mathbf{f}=(f_1,f_2)$ is given by
\[
\mathcal{R}\mathbf{f}(p,\omega) = c_n\left(-\partial_p^{(n+1)/2}Rf_1    + \partial_p^{(n-1)/2}Rf_2    \right), \quad c_n:= \frac12 (2\pi)^{(1-n)/2}
\]
for $n\ge3$ odd, which we assume from now on. It is known that $\mathcal{R}:\mathcal{H}_0\to L^2(\R\times \mathbb{S}^{n-1})$ is unitary. The inverse is given by 
\be{TD_9}
\mathcal{R}^{-1}k(x) = 2c_n^-\int_{\mathbb{S}^{n-1}}\left(-\partial_s^{(n-3)/2} k(x\cdot\omega,\omega), \; \partial_s^{(n-1)/2}k(x\cdot\omega,\omega)\right)\d\omega, \quad c_n^-:= \frac12 (-2\pi)^{(1-n)/2}.
\ee
Then, for $p>1$,
\[
\begin{split}
[\mathcal{R}&\mathbf{u}(T)] (p,\omega) = \\
&=c_n\left(-\partial_p^{(n+1)/2}Mf(T+1-p,\omega)    + \partial_p^{(n-1)/2}\partial_t Mf(T+1-p,\omega)   \right)\\
&= c_n\left(-[(-\partial_t)^{(n+1)/2}Mf](T+1-p,\omega)    + [(-\partial_t)^{(n-1)/2}\partial_t Mf](T+1-p,\omega)   \right)\\
& = 2c_n^-[\partial_t^{(n+1)/2}Mf](T+1-p,\omega).
\end{split}
\]
If we knew $\mathcal{R}\mathbf{u}(T)] (p,\omega) $ for all $p$ (and $\omega$), we could invert $\mathcal{R}$, get $\mathbf{u}(T)=(u,u_t)|_{t=T}$, and solve the wave equation with speed $c$ from $t=T$ to $t=0$. One naive attempt to do time reversal in our case would be to extend $\mathcal{R}\mathbf{u}(T)$ as zero for $0\le p\le 1$ and then apply $\mathcal{R}^{-1}$. That would create Delta type of functions in the inversion however.

If $T>2T_1$, then $u$ has no singularities in $\overline{B(0,1)}$. Then $\mathbf{u}(T)$ has no singularities conormal to $\pi_\omega$ for every unit $\omega$ because this is true in $\pi_\omega\cap \overline{B(0,1)} $, but also true outside it by the fact that all singularities of $\mathbf{u}(T)$ must be along geodesics issued from $\bar\Omega$; and outside it, $c=1$. Therefore, the missing part of $\mathcal{R}\mathbf{u}(T) (p,\omega) $ for $p>1$ and $\omega$ corresponding to planes intersecting $\overline{B(0,1)}$ is a smoothing operator applied to $f$. We would get a smoothing error if we cut it smoothly to zero for those planes. 

Set
\be{k}
k(p,\omega) := 2c_n^-[\partial_t^{(n+1)/2} Mf](T+1-p,\omega)
\ee
Based on those arguments, choose $\chi\in C^\infty(\R)$ so that $\chi(p)=0$ for $p<1+\eps$ and $\chi(p)=1$ for $p>1+2\eps$. If $0<\eps<(T-2T_1)/4$, then $\chi k$ differs from $\mathcal{R} \mathbf{u}(T) (p,\omega)$ by a smoothing term. Therefore, $\mathcal{R}^{-1}\chi k$ is a parametrix for $\mathbf{u}(T)$. If we use the measurements $N$, then $\chi k$ is defined with $M=N/2$ there, by Theorem~\ref{thm_MN}. 

Next theorem gives a time reversal construction that recovers $f$ up to a smoothing term with full data, when $T>T_1$, i.e., when we have stability (all singularities are visible). 
\begin{thm}
Let  $n\ge3$ be odd, $T>T_1$ and let $\chi$ be as above. Let $v$ be the solution of the acoustic wave equation in $(0,T)\times\R^n$ with Cauchy data $\mathbf{u}(T) = \mathcal{R}^{-1}\chi k$. Then 
\[
f = v|_{t=0}+Rf,
\]
with $R$ a smoothing operator. 
\end{thm}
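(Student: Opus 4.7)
The overall strategy is to show that the constructed Cauchy data $\mathcal{R}^{-1}\chi k$ at time $T$ differs from the genuine Cauchy data $\mathbf{u}(T):=(u,u_t)|_{t=T}$ of the forward problem \eqref{IVP} by a smoothing operator applied to $f$, and then to propagate this error backward in time using the wave propagator. Since the forward/backward wave propagator $U(\pm T)=\exp(\pm T\mathbf{P})$ is a unitary Fourier integral operator of order $0$ associated with the bicharacteristic flow, it preserves smoothness; hence a smoothing error at time $T$ produces a smoothing error at time $0$. The conclusion then follows by reading off the first component of $U(-T)\mathcal{R}^{-1}\chi k$ and comparing with $U(-T)\mathbf{u}(T)=(f,0)$.

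First I would record the exact identity derived immediately before the theorem, namely
\[
[\mathcal{R}\mathbf{u}(T)](p,\omega)=k(p,\omega)\qquad\text{for all }p>1,\;\omega\in\mathbb{S}^{n-1},
\]
which uses that $c\equiv 1$ on $|x|>1$ and the Lax--Phillips formula \eqref{TD_9}. Writing $e:=\chi k-\mathcal{R}\mathbf{u}(T)$, the choice of $\chi$ gives $e\equiv 0$ on $\{p>1+2\eps\}$, while on $\{p\le 1+2\eps\}$ we have $e=(\chi-1)\mathcal{R}\mathbf{u}(T)$ (with $\chi-1$ smooth and compactly supported in $p$ together with $\mathcal{R}\mathbf{u}(T)$ extended by $0$). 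Thus $\WF(e)$ is contained in $\WF\bigl(\mathcal{R}\mathbf{u}(T)\bigr)\cap\{p\le 1+2\eps\}$, and it suffices to prove that this set is empty.

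The key technical step, which I expect to be the main obstacle, is to show that $\mathcal{R}\mathbf{u}(T)$ is smooth on $\{p\le 1+2\eps\}$. Using the standard microlocal description of the Radon transform, a point $(p_0,\omega_0;\tau,\eta)\in\WF(\mathcal{R}\mathbf{u}(T))$ forces a singularity of $\mathbf{u}(T)$ at some $(x_0,\xi_0)$ with $x_0\cdot\omega_0=p_0$ and $\xi_0$ conormal to $\{x\cdot\omega_0=p_0\}$, i.e.\ $\xi_0\parallel\omega_0$. By propagation of singularities, $(x_0,\xi_0)$ must equal $(\gamma_{y,\hat\xi}(\pm T),c|\xi|\dot\gamma_{y,\hat\xi}(\pm T))$ for some $(y,\xi)\in\WF(f)\subset T^\ast\bar\Omega\setminus 0$. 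For $\eps<(T-2T_1)/4$ (so $T>2T_1+4\eps$) the bicharacteristic has exited $\bar\Omega$, and in the exterior $c\equiv 1$ so the geodesic is a straight line; the conormality condition $\xi_0\parallel\omega_0$ together with $x_0\cdot\omega_0=p_0$ then forces $|x_0|=p_0$ along a radial straight line, and the sharp--time bound on the exit time yields $|x_0|>1+2\eps$, contradicting $p_0\le 1+2\eps$. Consequently $\WF(\mathcal{R}\mathbf{u}(T))\cap\{p\le 1+2\eps\}=\emptyset$, so $e$ is smooth; because the construction is linear in $f$, the map $f\mapsto\mathcal{R}^{-1}e$ is a smoothing operator $S$.

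Finally I would assemble the pieces. From Steps above, $\mathcal{R}^{-1}\chi k=\mathbf{u}(T)+Sf$ in $\mathcal{H}$, and applying the unitary backward propagator gives
\[
\mathbf{v}(0)=U(-T)\mathcal{R}^{-1}\chi k=U(-T)\mathbf{u}(T)+U(-T)Sf=(f,0)+U(-T)Sf.
\]
Setting $Rf:=-[U(-T)Sf]_1$ (first component), the desired identity $f=v|_{t=0}+Rf$ follows. The operator $R$ is smoothing because $S$ is smoothing and $U(-T)$ is bounded on every Sobolev space; equivalently, $U(-T)$ is a zeroth order FIO whose canonical relation is the graph of the bicharacteristic flow, so it carries $C^\infty$ to $C^\infty$ and the composition $f\mapsto[U(-T)Sf]_1$ maps each $H^s$ into $\bigcap_kH^k$. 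This completes the argument.
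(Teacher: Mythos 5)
Your overall route is the same as the paper's: identify $k$ with $\mathcal{R}\mathbf{u}(T)$ for $p>1$, show that $e=\chi k-\mathcal{R}\mathbf{u}(T)=(\chi-1)\mathcal{R}\mathbf{u}(T)$ is a smoothing operator applied to $f$, and then run the wave group backwards (that last step, and the reduction to proving $\WF(\mathcal{R}\mathbf{u}(T))\cap\{p\le 1+2\eps\}=\emptyset$, are fine). The gap is in your proof of that wave--front statement. First, a local error: from $\xi_0\parallel\omega_0$ and $x_0\cdot\omega_0=p_0$ you conclude ``$|x_0|=p_0$ along a radial straight line,'' but the geodesic carrying the singularity exits $B(0,1)$ at a point $z$ with unit direction $w$ that is in general not radial; what one actually gets is $p_0=\pm x_0\cdot w=\pm(1+T-t_\pm(y,\xi))$, and $|x_0|>1+2\eps$ does not contradict $p_0\le 1+2\eps$. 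Second, and more seriously, the sign-blind conormality relation for the Radon transform that you invoke cannot exclude the representation of the same plane with the inward normal: for $\omega_0=-w$ one has $p_0=-(1+T-t_\pm)<-1\le 1+2\eps$, and since $\WF(u(T,\cdot))$ is symmetric under $\xi\to-\xi$ (the data are real), $Ru(T,\cdot)$ and $Ru_t(T,\cdot)$ individually \emph{do} have singularities at such $(p_0,\omega_0)$ with $p_0<-1$. So the set you claim is empty is not empty component-by-component; the smoothness of $\mathcal{R}\mathbf{u}(T)$ for $p\le 1+2\eps$ holds only because of cancellation between the two terms $-\partial_p^{(n+1)/2}Ru$ and $\partial_p^{(n-1)/2}Ru_t$, i.e., because the Lax--Phillips translation representation separates outgoing from incoming waves.

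To close the gap you must use this directional information: decompose $u=u_++u_-$ as in \eqref{GOS}, observe that at time $T>2T_1$ each half is purely outgoing with its singularity located at $x_0=\gamma(T)$ moving in the unit direction $v=\dot\gamma(T)$, and verify (by a principal-symbol computation, or by citing the Lax--Phillips characterization of outgoing data) that each half contributes to $\WF(\mathcal{R}\mathbf{u}(T))$ only at the outgoing representation $(p,\omega)=(x_0\cdot v,v)$ with $x_0\cdot v=1+T-t_\pm>1+2\eps$, and not at $(-x_0\cdot v,-v)$. With that lemma in place the rest of your argument goes through. A minor separate remark: the existence of $\eps$ with $0<\eps<(T-2T_1)/4$ requires $T>2T_1$ rather than $T>T_1$; that discrepancy is already present in the paper's statement and is not your doing, but it should at least be flagged.
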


Since $\supp \mathbf{u}(T)\subset \overline B(0,1+T)$, we can solve the wave equation for $v$ in the cylinder $(0,T)\times B(0,1+T+\eps)$ for some fixed $\eps$ with Dirichlet, Neumann or some kind of absorbing boundary conditions because no singularities of $v$ leave the smaller cylinder corresponding to $\eps=0$.

We have a refined result for partial data when some singularities might not be invisible. 
\begin{thm}\label{thm_TR_p}
Let  $n\ge3$ be odd. 
Let $\chi\in C^\infty_0(\R_+\times\bo)$ and let $k$ be as in \r{k}. Let $T>0$ be such that $\supp \chi \subset [0,T)\times\bo $.  Let $v$ be the solution of the acoustic wave equation in $(0,T)\times\R^n$ with Cauchy data $\mathbf{u}(T) = \mathcal{R}^{-1}\chi k$. Then 
\[
v|_{t=0} = Pf,
\]
where $P$ is a \PDO\ of order zero with a principal symbol
\[
p(x,\xi)= \frac12  \chi\big(t_+(x,\hat\xi) , \dot\gamma_{x,\hat\xi}(t_+(x,\hat\xi))\big) +\frac12\chi\big(t_-(x,\hat\xi) ,-\dot \gamma_{x,\hat\xi}(t_-(x,\hat\xi))\big)   . 
\]
\end{thm}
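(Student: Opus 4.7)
The plan is to express $\mathcal{T}_\chi : f \mapsto v|_{t=0}$ as a composition of Fourier integral operators whose composed canonical relation is the diagonal of $T^*\Omega \times T^*\Omega$; then $\mathcal{T}_\chi$ is a \PDO\ of order zero and its principal symbol can be read off from the individual factors. Concretely, I would write
$$\mathcal{T}_\chi = \pi_1 \circ U(-T)\circ \mathcal{R}^{-1}\circ M_\chi \circ \mathcal{K},$$
where $\mathcal{K}:f\mapsto k$ is defined by \eqref{k}, $M_\chi$ is multiplication by $\chi(p,\omega)$, $\pi_1$ extracts the first component, and $U(t)=\exp(t\mathbf{P})$ is the unitary wave group on $\mathcal{H}_0$.

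The first step is to use the derivation immediately preceding the theorem to identify, modulo smoothing, $\mathcal{K} \equiv \mathcal{R}\circ U(T)\circ \iota$, where $\iota f = (f,0)$. For $p>1$ this is the exact identity $k(p,\omega)=[\mathcal{R}\mathbf{u}(T)](p,\omega)$; for $p\le 1$ any discrepancy comes from singularities of $\mathbf{u}(T)$ still inside $\overline{B(0,1)}$, which by propagation of singularities produce only a smoothing contribution in the final composition. Substituting yields
$$\mathcal{T}_\chi \equiv \pi_1 \circ U(-T)\circ\bigl(\mathcal{R}^{-1}\chi\mathcal{R}\bigr)\circ U(T)\circ \iota \pmod{\text{smoothing}},$$
and one checks that when $\chi\equiv 1$ this reduces to the identity on $f$ modulo smoothing, recovering the preceding theorem. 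Next I would decompose $M=M_++M_-$ via Theorem~\ref{FIO}, which induces $\mathcal{T}_\chi=\mathcal{T}_\chi^++\mathcal{T}_\chi^-$. For each branch one traces the canonical relation: $C_\pm$ sends $(y,\xi)\in T^*\Omega\setminus 0$ to the bicharacteristic intersection point on $\pi_{\omega_\pm}$; multiplication by $\chi$ acts as a PDO-type multiplier in the translation representation and hence preserves microlocal position; $U(-T)$ then propagates the singularity back along the same geodesic to $(y,\xi)$. Hence each canonical relation equals the diagonal and $\mathcal{T}_\chi^\pm$ is a \PDO\ of order zero.

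To extract the principal symbol I would perform the geometric-optics computation based on \eqref{GOS}--\eqref{8}. The initial amplitudes $a_\pm^{(0)}(0,\cdot,\cdot)=\tfrac12$ account for the factors of $\tfrac12$ in $p(x,\xi)$; when $\chi\equiv 1$ the Radon transform at $p=1$, the fractional time-derivative $\partial_t^{(n+1)/2}$, the Lax-Phillips inversion $\mathcal{R}^{-1}$, and the wave groups $U(\pm T)$ compose to the identity on each branch, so inserting $\chi$ contributes only a multiplicative weight given by $\chi$ evaluated at the image point of $C_\pm$, namely $\chi(t_\pm(x,\hat\xi),\pm\dot\gamma_{x,\hat\xi}(t_\pm(x,\hat\xi)))$. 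Summing the two branches yields the stated formula. The main technical obstacle will be the bookkeeping of the principal symbol through $\mathcal{R}^{-1}$ and the conjugation by $U(\pm T)$ for variable $c$, where careful use of Egorov's theorem and the unitarity of $\mathcal{R}$ is needed; a secondary subtlety is verifying that the $p\le 1$ portion of $k$ contributes only a smoothing error when $\supp\chi$ extends into the strip $p\in[0,1]$.
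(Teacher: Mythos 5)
Your proposal follows essentially the same route as the paper: factor the time reversal through the Lax--Phillips representation, split $M=M_++M_-$ via Theorem~\ref{FIO}, obtain the factors of $\tfrac12$ from the initial amplitudes $a^{(0)}_\pm|_{t=0}=\tfrac12$ in \eqref{GOS}--\eqref{8} (the paper makes this rigorous by noting $u_\pm$ are parametrices with Cauchy data $\tfrac12(f,\pm\i\sqrt{-c^2\Delta}f)$ and that $M_\pm$ have separated ranges in $\tau$), and apply Egorov's theorem to see that inserting $\chi$ multiplies the principal symbol by $\chi$ pulled back through $C_\pm$. Your worry about the strip $p\le 1$ is moot, since $\supp\chi\subset[0,T)\times\bo$ forces $p=T+1-t>1$ on the support of $\chi k$.
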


\begin{proof} 
Consider the mappings
\[
C_0^\infty(\Omega_1)\ni  f \; \mathop{\longrightarrow}^{M}\; Mf \; \mathop{\longrightarrow}^{K}\; k\in C_0^\infty((1,1+T)\times \mathbb{S}^{n-1}),
\]
where $Kh$ is as in \r{k} with $h= Mf$ there, and $\Omega_1$ is a domain such that $\Omega\Subset\Omega_1\Subset B(0,1)$.  The operator $K$ is a composition of a differential operator and a linear transformation of the variables and as such, is a trivial FIO associated with a diffeomorphic canonical relation. Choose $\chi_0$ as $\chi$ in \r{k} but related to $\Omega_1$ now.  
Define $F$ as the operator mapping the  Cauchy data $\mathcal{R}^{-1}\chi_0 k$ at $t=T$   to the solution of the acoustic equation at $t=0$. Then $F$ is a microlocal left parametrix of $KM$ restricted to some conic neighborhood of the singularities visible from $\supp\chi$, i.e., $FKM=\Id$ up to a smoothing operator on that conic neighborhood. 
As we proved above, $M=M_++M_-$, where $M_\pm$ are associated with canonical diffeomorphisms. We will show below that $FKM_\pm=\frac12\Id$ modulo a smoothing operator. On intuitive level, this is clear from the second equation in \r{8}: when each of the singularities of $f$ splits into two, the principal parts of the amplitudes in the geometric optics expansion \r{GOS} of each part at $T=0$ are equal and equal to $1/2$. We compare $FKM_\pm$ (which equals $\frac12\Id$ modulo a \PDO\ of order $-1$)  with $FK\chi M_\pm$. Since $\chi_0\chi=\chi$, 
by the Egorov's theorem (\cite[Theorem~25.3.5]{Hormander4}), $FK\chi M_\pm$ are \PDO s with  principal symbol given by $\chi$ pulled back by the canonical relation of $M_\pm$, which proves the theorem. 

It remains to prove the claim we used in the previous paragraph. It can be easily seen (see \cite{SU2011}), that the $\sigma=\pm$ terms in \r{GOS} that we call $u_\pm$, are parametrices for the wave equation with Cauchy data $\frac12(f, \pm\i \sqrt{-c^2\Delta}f)$ at $t=0$. The operators $M_\pm$ are obtained from them as in \r{plane}. By Theorem~\ref{FIO},  $M_\pm$ have separated ranges (by the sign of $\tau$) and we can use a pseudo-differential partition of unity $A_++A_-$ w.r.t.\ $t$ to separate them, i.e., $A_\pm M=M_\pm$. Then $FKM_\pm f=FKA_\pm M f$. The operator $F$ is just a time reversal of $\mathcal{R}^{-1}\chi_0 k$ from $t=T$ to $t=0$, therefore, $FKM_+ f$ is the first component of $(u_+,\partial_t u_+)$ at $t=0$, which equals $f/2$ modulo a smoothing operator applied to $f$. The same statement follows for  $FKM_+ f$. 
\end{proof}

The theorem allows to construct a parametrix recovering any fixed in advance compact subset of the visible singularities from $(0,T)\times\Gamma$ by choosing $\chi$ equal to one on the image of that subset under $C_+\cup C_-$, and zero near the boundary of $(0,T)\times\Gamma$. Note that $\chi$ could also be a \PDO\ of order zero with obvious modifications of the theorem.

\bibliographystyle{amsalpha}

\end{document}